\documentclass[11pt]{amsart}
\usepackage{amsmath}
  \usepackage{paralist}
  \usepackage{graphics} 
  \usepackage{epsfig} 
\usepackage{graphicx}  \usepackage{epstopdf}
 \usepackage[colorlinks=true]{hyperref}
\hypersetup{urlcolor=blue, citecolor=red}

  \textheight=8.2 true in
   \textwidth=5.0 true in
    \topmargin 30pt
     \setcounter{page}{1}

\newtheorem{theorem}{Theorem}[section]

\newtheorem{lemma}[theorem]{Lemma}
\newtheorem{proposition}{Proposition}

\theoremstyle{definition}

\newtheorem{remark}{Remark}

\newcommand{\ep}{\varepsilon}

\newcommand{\rmO}{\mathrm{O}}
\newcommand{\rmo}{\mathrm{o}}

\newcommand{\rmd}{\mathrm{d}}
\newcommand{\rme}{\mathrm{e}}
\newcommand{\rmi}{\mathrm{i}}

\title[Inhomogeneities in 3-d oscillatory media]
      {Inhomogeneities in 3 dimensional oscillatory media}

\author[Gabriela Jaramillo]{}

\subjclass{Primary: 58F15, 58F17; Secondary: 53C35.}
 \keywords{target patterns, Kondratiev spaces.}

 \email{jara0025@umn.edu}

\thanks{The author is supported by NSF grant DMS-0806614 and DMS-1311740. }

\begin{document}
\maketitle

\centerline{\scshape Gabriela Jaramillo }
\medskip
{\footnotesize
 \centerline{University of Minnesota}
   \centerline{School of Mathematics}
\centerline{127 Vincent Hall, 206 Church St SE}
   \centerline{ Minneapolis, MN 55455, USA}
} 


l

\bigskip

\begin{abstract}
We consider localized perturbations to spatially homogeneous oscillations in dimension 3 using the complex Ginzburg-Landau equation as a prototype. In particular, we will focus on inhomogeneities that locally change the phase of the oscillations. In the usual translation invariant spaces and at $ \ep=0$  the linearization about these spatially homogeneous solutions result in an operator with zero eigenvalue embedded in the essential spectrum. In contrast, we show that when considered as an operator between Kondratiev spaces, the linearization is a Fredholm operator. These spaces consist of functions with algebraical localization that increases with each derivative.  We use this result to construct solutions close to the equilibrium via the Implicit Function Theorem and derive asymptotics for wavenumbers in the far field. 
\end{abstract}

\section{Introduction}

This paper is concerned with the effects of inhomogeneities in oscillatory media. As a prototype we study the complex Ginzburg-Landau equation,
\begin{equation}\label{CGL1}
A_t = (1+ i\alpha ) \Delta A + A - ( 1 + i\gamma ) A |A|^2,
\end{equation}
which is known to approximate the phase and amplitude of modulation patterns in reaction diffusion systems near a supercritical Hopf bifurcation \cite{AK02}. Stationary in time inhomogeneities which produce a localized change in the phase of oscillations in such a system can be well modeled by the inclusion of a term $ i \ep g(x) A$ in \eqref{CGL1}. The effects of such inhomogeneities can vary dramatically depending on the sign of $\ep$ and the space dimension. This has been explored formally in the phase-diffusion approximation in \cite{SM06}, and for general reaction-diffusion equations and radially symmetric inhomogeneities in \cite{KS07}. Most notably, inhomogeneities can create wave sources in space dimension 1 and 2. In dimension 3 and radial geometry it was shown in  \cite{KS07} that sources are weak, that is, wavenumbers decay in the far field. In this note, we establish a similar result \emph{without} the assumption of radial symmetry and without relying on spatial dynamics. In addition, we relax the assumption of spatial decay of $g(x)$.

To accomplish this task we hope to use the Implicit Function Theorem to find approximations near a spatially homogeneous solution to the complex Ginzburg-Landau equation. As we will see, the linearization about these steady solutions results in an operator which is not Fredholm in the usual translation invariant Sobolev spaces. This is a consequence of zero belonging to the essential spectrum, which in some instances can be taken care of by working in exponentially localized spaces. However, since we will be considering algebraically localized inhomogeneities these spaces do not provide the appropriate framework, the use of exponential weights would turn the linearization into a semi-Fredholm operator with infinite dimensional cokernel. Instead, we will use an approach based on functional analysis and try to recover Fredholm properties of the linearization using Kondratiev spaces and the results from \cite{M79}, where it was shown that the Laplacian is a Fredholm operator. 

In addition to Kondratiev spaces, our method relies on weighted Sobolev space. We will see that for certain weights of the form  $( 1+ |x|^2)^{\delta/2}$ the linearization about steady solutions possesses a cokernel. We will therefore consider an Ansatz which adds far field corrections and obtain as a result an invertible operator. This approach works well for weights with $\delta<1/2$, however for $\delta>1/2$ these correction terms prove to be problematic since they result in nonlinearities which are not well defined, i.e. they do no belong to the correct weighted space. The same is true in the 2 dimensional case for all weights that account for decaying inhomogeneities. we hope to address these issues in the future  and restrict ourselves in the present paper to the 3 dimensional case with $\delta<1/2$. This will provide a straight forward example where the advantage of viewing the linearization in the setting of  Kondratiev spaces can be appreciated without the extra complications coming form the nonlinearity.

We begin the analysis by considering the spatially homogeneous solution $A_*(t)=\rme^{-\rmi \gamma t}$ of equation \eqref{CGL1} and looking for approximations of the form $A(x,t)=(1-s(x))\rme^{-\rmi (\gamma t - \phi(x))}$. In Section \ref{Proof} we will show, using Lyapunov-Schimdt reduction, that in dimension 3 it is possible to find solutions near $A_*$. The asymptotics for the function $\phi(x)$ will show that in the far field the wavenumber $k \sim \nabla \phi$ decays to zero and hence target patterns will not form. We state this result in the following Theorem:

\begin{theorem}\label{Th:main}
Suppose  $\delta \in (-1/2,1/2)$, $g \in L^2_{\delta+2}$, and $1 + \alpha \gamma>0$. Then, there exist $\ep_0>0$ and smooth functions $S(x,\ep)$ and $\Phi(x,t; \ep)$ such that $$A(x,t; \ep) =  S(x, \ep)  \rme^{  \Phi(x,t;\ep)}$$ is a family of solutions to  \eqref{CGL1} near $A= \rme^{-\rmi \gamma t}$  for all $\ep \in (-\ep_0,\ep_0)$. Furthermore, for fixed $\ep \in (-\ep_0, \ep_0)$ and $t$,  the functions $S(x;\ep)$ and $\Phi(x,t;\ep)$ satisfy the following asymptotics in $x$,
\begin{align*}
|S(x,\ep)-1| \leq & C |x|^{-(\delta + 2.5)},\\
\Phi(x,t;\ep) =&  -\rmi \gamma t + \rmi \frac{c(\ep)}{|x|}\left( 1 + \rmo_1( 1/|x|) \right),
\end{align*}
as $|x| \rightarrow \infty$, where $c(\ep)$ is a smooth function satisfying the expansion $c(\ep)  = \ep c_1 + O(\ep^2)$. In particular, $$c_1 = \dfrac{1}{4 \pi ( 1 + \alpha \gamma) } \int g \;\rmd x.$$

\end{theorem}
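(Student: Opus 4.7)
The plan is to substitute the Ansatz $A(x,t) = (1-s(x))\,\rme^{-\rmi(\gamma t - \phi(x))}$ into \eqref{CGL1} supplemented by the term $\rmi\ep g A$, divide through by $\rme^{-\rmi\gamma t + \rmi \phi}$, and separate real and imaginary parts. This produces a quasilinear elliptic system for $(s,\phi)$ whose linearization at $(0,0,0)$ has the block form
\begin{equation*}
L = \begin{pmatrix} -\Delta + 2 & -\alpha\Delta \\ -\alpha\Delta + 2\gamma & \Delta \end{pmatrix},
\end{equation*}
with inhomogeneity $(0,-\ep g(1-s))^{T}$ at first order in $\ep$ and quadratic nonlinearities of the form $(1-s)|\nabla\phi|^{2}$, $\nabla s \cdot \nabla\phi$, and higher. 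The goal is to invert $L$ on suitable Kondratiev and weighted Sobolev spaces and conclude by the Implicit Function Theorem.

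The amplitude block $-\Delta + 2$ is invertible on $L^{2}_{\delta}$ for $\delta\in(-1/2,1/2)$, so a Schur complement solves the real-part equation for $s$ in terms of $\phi$. The resulting scalar operator on $\phi$ has low-frequency symbol $-(1+\alpha\gamma)|\xi|^{2}$; the hypothesis $1+\alpha\gamma>0$ then ensures it is a lower-order perturbation of $(1+\alpha\gamma)\Delta$, and by \cite{M79} it inherits the Fredholm properties of the Laplacian between appropriate Kondratiev spaces $M^{2,2}$. In the weight range $\delta\in(-1/2,1/2)$ there is a one-dimensional cokernel, coming from the harmonic mode that fails to lie in the range, and one removes it by enlarging the Ansatz with a far-field correction of the form $c\chi(x)/|x|$, where $\chi$ is a smooth cut-off vanishing near the origin. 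The additional scalar parameter $c$ plays the role of a Lagrange multiplier and turns the augmented linearization into an isomorphism between the chosen spaces.

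With invertibility of the augmented operator established, the Implicit Function Theorem produces a smooth branch $(s(\ep),\phi(\ep),c(\ep))$ for $|\ep|<\ep_{0}$. The remaining verification is that the nonlinearities, including those generated by the $c\chi/|x|$ correction whose gradient decays only as $|x|^{-2}$, map the product space smoothly into the correct weighted target; this is exactly the requirement that forces $\delta<1/2$ in dimension three. To obtain the stated asymptotics, one expands $(s,\phi,c)=\ep(s_{1},\phi_{1},c_{1}) + \rmO(\ep^{2})$; reducing by the Schur complement at leading order yields the Poisson equation $(1+\alpha\gamma)\Delta\phi_{1} = -g$, and its three-dimensional Newtonian potential representation gives
\begin{equation*}
\phi_{1}(x) = \frac{1}{4\pi(1+\alpha\gamma)|x|}\int g\,\rmd x + \rmo\!\left(\tfrac{1}{|x|}\right),
\end{equation*}
identifying $c_{1}$ as stated. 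The decay rate $|S-1| = \rmO(|x|^{-(\delta+5/2)})$ then follows from $s_{1}=\alpha(-\Delta+2)^{-1}\Delta\phi_{1}$ together with the gain of two weight indices built into $(-\Delta+2)^{-1}$ in this Kondratiev setting.

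The main technical obstacle is the combined Fredholm and mapping analysis for the augmented operator: establishing that the Schur-reduced phase equation is Fredholm between the correct Kondratiev spaces, identifying its cokernel precisely, and checking that a single scalar far-field correction suffices to close the Fredholm alternative while keeping the nonlinearity valued in the target space. The interplay of these three requirements is what dictates the admissible weight window $\delta\in(-1/2,1/2)$ and is the point where the Kondratiev-space framework, as opposed to ordinary weighted Sobolev spaces, becomes essential.
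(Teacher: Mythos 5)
Your proposal follows essentially the same route as the paper: the same Ansatz with the far-field correction $c\,\chi(|x|)/|x|$, the same Schur-complement reduction using invertibility of the amplitude block on weighted Sobolev spaces, McOwen's Fredholm theory for $\Delta:M^{2,2}_{\delta}\to L^{2}_{\delta+2}$ to identify the one-dimensional cokernel (the mean-zero condition), the bordering argument with $\int\Delta(\chi/|x|)\,\rmd x=-4\pi$ to restore invertibility, the Implicit Function Theorem, and the identification of $c_{1}$ by integrating the order-$\ep$ equation (your Newtonian-potential phrasing is equivalent, since $\int\Delta s_{1}=\int\Delta\tilde{\phi}_{1}=0$). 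The paper additionally performs a diagonalizing change of variables $\hat{s}=\gamma s$, $\hat{\phi}=-\gamma s+\phi$ before linearizing, but that is cosmetic.

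There is one step whose justification as written would fail: the pointwise decay rates. You derive $|S-1|=\rmO(|x|^{-(\delta+5/2)})$ from a purported ``gain of two weight indices'' of $(-\Delta+2)^{-1}$, but that resolvent maps $L^{2}_{\delta+2}$ to $W^{2,2}_{\delta+2}$ with \emph{no} gain in the weight (Proposition \ref{Sobolev}); a weight gain of two would in any case give a different exponent. The exponent $\delta+5/2=(\delta+2)+1/2$ actually comes from a weighted Sobolev embedding in $\mathbb{R}^{3}$: $f\in W^{2,2}_{\gamma}$ implies $|f(x)|\leq C\langle x\rangle^{-\gamma-1/2}$ (proved by an interpolation/trace argument on spheres, Lemmas \ref{decay} and \ref{decay2} of the paper). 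The same ingredient is silently needed to justify that the remainder $\tilde{\phi}\in M^{2,2}_{\delta}$ is $\rmo(1/|x|)$, so that the $c(\ep)/|x|$ term really is the leading asymptotic of the phase: one needs $|\tilde{\phi}(x)|\leq C\langle x\rangle^{-\delta-3/2}$ with $\delta+3/2>1$, which is exactly where the restriction $\delta>-1/2$ enters the asymptotics. Supplying these two embedding lemmas closes the gap; everything else in your outline matches the paper's argument.
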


\begin{remark}

\begin{enumerate}
\item Notice that we do not have asymptotic predictions for the amplitude, just an upper bound on the rate of its decay. 

\item The values of $\delta$ are related to the choice of spaces we make. In the case of $\delta \in (-1/2,1/2)$ our analysis shows that the linearization about the steady solution $A=\rme^{-\rmi \gamma}$ is a Fredholm operator of index $i=-1$. If we consider weights with $\delta \in (1/2+m, 1/2+2m)$, for $m \in \mathbb{N}$, the linearization is again a Fredholm operator, but now with a larger co-kernel consisting of harmonic polynomials of degree $m-1$. In this case, it seems reasonable to add to the Ansatz a series of correction terms which would span the cokernel of our linearization. In particular, these terms should consist of derivatives of the fundamental solution $\frac{1}{|x|}$ of all degrees up to $m-1$. The difficulty in this case is that this type of Ansatz results in a non-linear operator which is not well defined in $L^2_{\delta+2}$ (see Proposition \ref{prop:Nonlinearity}). Nonetheless, because $L^2_{\alpha} \subset L^2_{\beta} $ for  $\beta< \alpha$, if we consider a very localized inhomogeneity we can always assume it is in a space $L^2_{\delta +2}$ with $-1/2<\delta< 1/2$. In other words, Theorem \ref{Th:main} holds for $g \in L^2_{\sigma}$ with $\sigma>3/2$, and in this case we take $\delta = 1/2$ for the bounds of $|S(x,\ep) -1|$. However, for these values of $\sigma$ it is still an open problem to determine if this bound is sharp.

\item In the case of $\delta \in (-3/2,-1/2)$, we can consider spaces which yield an invertible linearization. Our analysis then shows that the amplitude $S(x,\ep)$ should obey the same decay as stated in Theorem \ref{Th:main}, but we do not expect phase decay at order O(1/|x|). In fact, the coefficient of the leading order term, $\int g \rmd x$, is not necessarily defined when $g$ is in $L^2_{\delta+2}, \delta<-1/2$. Our result would only give decay associated with the function space $M^{2,2}_{\delta}$ (see Lemma 3.2).

\item Finally, we just point out that we are not interested in studying inhomogeneities with slow decay, $g\sim |x|^{-\alpha} \alpha<1$, or that grow algebraically, and so we do not look at the case when $\delta<-3/2$.
\end{enumerate}
\end{remark}

The predictions of Theorem \ref{Th:main}  agree with the results found in \cite{KS07}, where the authors show that in the more general case of reaction diffusion equations and in dimensions 3 and higher, there exists only contact defects (the wave number $k  \sim \nabla \phi \rightarrow 0$ in the far field) and obtain asymptotics for the wavenumber $k$, 
\[ k(r,\ep) = \frac{M \ep}{r^{n-1}} ( \hat{c} + \rmO_{1/r}(1) ), \]
where the notation $\rmO_{y}(1)$ means that these terms go to zero as $y \rightarrow 0$. This implies that for large values of $|x|$ and fixed $\ep$ we do not see a pace maker effect. Nonetheless, if we fix $|x|$ large we can approximate the group velocity, $c_g$, for the family of solutions $A(x,t;\ep)$ in terms of $\ep$:
\[ c_g(\ep) = 2(\alpha - \gamma)k  \sim -2  (\alpha - \gamma) \frac{\ep c_1}{|x|^2}.\]

\begin{figure}
\begin{center}
\includegraphics[scale=0.5]{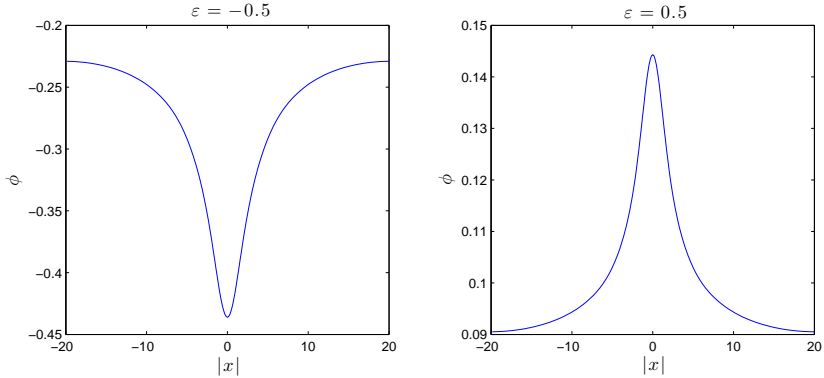} 
\end{center}
\caption{Plot of the phase vs. $x$-axis for the cross section $y=z=0$. For the parameter values used in the simulation the expression $(\alpha-\gamma)<0$. As a result, a negative phase gradient as $|x| \rightarrow \infty$ indicates a positive group velocity, whereas a positive phase gradient as $|x| \rightarrow \infty$ indicates a negative group velocity.}
\label{groupvelocity}
\end{figure}
In particular, if $\ep (\gamma- \alpha) \int g  >0 $ then $c_g>0$ and we obtain weak wave sources. These results were confirmed in numerical simulations with a cubic domain of length $l=40$, parameter values $\alpha =1, \gamma=5$, and with the following inhomogeneity
\[
g(x,y,z) =  \dfrac{1}{( 1 + 1/4(x-10)^2+ 2(y-10)^2 +(z-10)^2)^{3.2/2}  },
\]
  (see figure \ref{groupvelocity}). All simulations were done with an exponential time differencing algorithm (ETDRK4) following the methods found in \cite{K03,KT05}.

This paper is organized as follows:  In Section \ref{weighted}, we define weighted Sobolev spaces and Kondratiev spaces and state Fredholm properties for the Laplace operator. Next, in Section \ref{Proof} we give a proof of our main result and finally, in Section \ref{numerics}, we present numerical simulations of our results. In particular, we show the decay rates for the amplitude and phase agree with our predictions.

\section{Weighted and Kondratiev spaces} \label{weighted}
\subsection{Weighted spaces}
In this paper we consider the weight $ \langle x \rangle = (1 + |x|^2)^{1/2}$ and define the weighted Sobolev spaces, $W^{k,p}_{\delta}$, as the completion of $C^{\infty}_0(\mathbb{R}^n)$ under the norm
\[ \|u \|_{W^{k,p}_{\delta}} = \left( \sum_{|\alpha|\leq k} \| D^{\alpha} u \cdot \langle x \rangle^{\delta} \|^p_{L^p} \right)^{1/p},\]
with $ 1< p < \infty$, $\delta \in \mathbb{R}$ and $k \in \mathbb{N}$. Notice that we have inclusions of the form $W_{\beta}^{k.p}\subset W^{k,p}_{\alpha}$ for any real numbers $\alpha, \beta$ such that $ \alpha < \beta$. Furthermore, we have the following proposition which was proven in \cite{JS13}.

\begin{proposition}\label{Sobolev}
The operator $\Delta -a : W_{\delta}^{2,p} \rightarrow L^p_{\delta}$ is invertible for all real numbers $a >0$ and $p \in (1, \infty)$. 
\end{proposition}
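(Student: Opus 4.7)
The plan is to exploit the fact that, since $a>0$, the symbol $-(|\xi|^2+a)$ of $\Delta-a$ is smooth and bounded below, so the unweighted map $\Delta-a:W^{2,p}(\mathbb{R}^n)\to L^p(\mathbb{R}^n)$ is already an isomorphism by standard Fourier multiplier theory. Its inverse is convolution with the Bessel (Yukawa-type) fundamental solution $G_a$, which has a singularity of order $|x|^{-(n-2)}$ at the origin and, crucially, decays \emph{exponentially} as $|x|\to\infty$. The gradient $\nabla G_a$ has the same exponential tail and a locally integrable singularity of order $|x|^{-(n-1)}$, while the second-order derivatives $D^2 G_a$ form a Calder\'on--Zygmund kernel.

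First I would establish the $L^p_\delta\to L^p_\delta$ bound for the solution operator using Peetre's inequality
\[
\langle x\rangle^\delta \le C\,\langle x-y\rangle^{|\delta|}\langle y\rangle^\delta,
\]
which converts the pointwise estimate $\langle x\rangle^\delta|G_a*f|(x)\le C\,(\tilde G_a*|f\langle\cdot\rangle^\delta|)(x)$, where $\tilde G_a(x)=|G_a(x)|\langle x\rangle^{|\delta|}$. Exponential decay of $G_a$ makes $\tilde G_a\in L^1(\mathbb{R}^n)$, so Young's inequality yields $\|u\|_{L^p_\delta}\le C\|f\|_{L^p_\delta}$, and the same reasoning applied to $\nabla G_a$ gives the corresponding $W^{1,p}_\delta$ bound. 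The harder estimate is the one for $D^2 u$, since $D^2 G_a$ is not integrable; for this I would pass through a dyadic decomposition $\mathbb{R}^n=B_1\cup\bigcup_{k\ge 1}A_k$ with $A_k=\{2^{k-1}\le|x|\le 2^{k+1}\}$, apply interior Calder\'on--Zygmund regularity
\[
\|u\|_{W^{2,p}(A_k)}\le C\bigl(\|(\Delta-a)u\|_{L^p(A_k')}+\|u\|_{L^p(A_k')}\bigr)
\]
on a slightly enlarged annulus $A_k'$, and exploit that $\langle x\rangle^\delta\simeq 2^{k\delta}$ on $A_k$. Raising to the $p$-th power, multiplying by $2^{k\delta p}$, and summing yields $\|u\|_{W^{2,p}_\delta}\le C(\|f\|_{L^p_\delta}+\|u\|_{L^p_\delta})$, and the second term on the right is absorbed by the previous step.

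Injectivity would follow from a Fourier-side argument: any $u\in W^{2,p}_\delta$ has at most polynomial growth, hence $u\in\mathcal{S}'(\mathbb{R}^n)$, and $(\Delta-a)u=0$ gives $(|\xi|^2+a)\hat u=0$, so $\hat u=0$ and $u=0$. Surjectivity would follow by approximating $f\in L^p_\delta$ by $f_n\in C^\infty_0(\mathbb{R}^n)$, defining $u_n=-G_a*f_n\in\mathcal{S}(\mathbb{R}^n)$, and using the weighted estimates just established to show $\{u_n\}$ is Cauchy in $W^{2,p}_\delta$ with limit $u$ satisfying $(\Delta-a)u=f$. The main technical obstacle is precisely the weighted second-derivative bound, which is where the dyadic-annulus trick is essential; everything else reduces to exponential decay of $G_a$ versus polynomial weights, which is benign.
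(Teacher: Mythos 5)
Your plan is sound, but note that the paper itself offers no proof of this proposition: it is quoted as a known result from \cite{JS13}, so there is no in-paper argument to match against. The standard route in that reference is to conjugate by the weight, writing $\langle x\rangle^{\delta}(\Delta-a)\langle x\rangle^{-\delta}=\Delta-a+R$ where $R$ is a first-order operator whose coefficients decay like $\langle x\rangle^{-1}$; this reduces the weighted problem to an unweighted one, with $R$ a relatively compact perturbation of the isomorphism $\Delta-a:W^{2,p}\to L^p$, giving Fredholm of index $0$, and injectivity then follows on the Fourier side exactly as you argue. Your kernel-based proof is a genuinely different and equally valid route: Peetre's inequality plus exponential decay of the Bessel kernel $G_a$ handles the $L^p_\delta$ and $W^{1,p}_\delta$ bounds for \emph{every} $\delta\in\mathbb{R}$ (which is the key point --- a global weighted Calder\'on--Zygmund argument would impose the $A_p$ restriction $-n<\delta p<n(p-1)$, and your dyadic localization correctly sidesteps this because the weight is essentially constant on each annulus $A_k'$). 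Two small points to make explicit when writing this up: the constants in the interior estimates on $A_k'$ must be uniform in $k$, which you get either from the scaling $\|D^2u\|_{L^p(B_r)}\le C(\|\Delta u\|_{L^p(B_{2r})}+r^{-2}\|u\|_{L^p(B_{2r})})$ with $r\sim 2^k$ large, or by covering $A_k$ with unit balls of bounded overlap; and in the surjectivity step you should say why the two-sided inverse extends from the dense subspace $C^\infty_0\subset L^p_\delta$ (boundedness of the solution operator on that dense set, plus continuity of $\Delta-a$ on $W^{2,p}_\delta$, gives that the compositions equal the identity everywhere). The trade-off is that the conjugation argument is shorter but leans on relative compactness, while yours is longer but fully quantitative and self-contained.
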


The above proposition also shows why  the Laplace operator does not have closed range when considered in the setting of weighted Sobolev spaces: just as in the case of $\Delta: H^2 \rightarrow L^2$, we can construct Weyl's sequences for the Laplace operator proving that zero is in the essential spectrum. We summarize this results as a lemma:
\begin{lemma}\label{notFredholm}
 The operator $\Delta_{\delta}:W^{2,p}_{\delta} \rightarrow L^p_{\delta}$ is not a Fredholm operator for $p \in (1, \infty)$.
\end{lemma}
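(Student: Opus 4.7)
The plan is to exhibit a Weyl sequence at zero, i.e.\ a bounded sequence $\{v_n\} \subset W^{2,p}_{\delta}$ with $\|v_n\|_{W^{2,p}_\delta} = 1$, $v_n \rightharpoonup 0$ weakly, and $\|\Delta v_n\|_{L^p_\delta} \to 0$. The existence of such a sequence prevents $\Delta_\delta$ from being upper semi-Fredholm, and in particular from being Fredholm: for an upper semi-Fredholm operator $T$, every bounded sequence with $Tv_n \to 0$ admits a norm-convergent subsequence, but here such a subsequence would have to converge strongly to the weak limit $0$, contradicting $\|v_n\|=1$.

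To build the sequence I would fix a single $\phi \in C^\infty_0(\mathbb{R}^n)$, $\phi \not\equiv 0$, supported in the annulus $\{1\leq|y|\leq 2\}$, pick $R_n \to \infty$, and set $u_n(x) := \phi(x/R_n)$. The support of $u_n$ lies in the shell $\{R_n\leq|x|\leq 2R_n\}$, on which $\langle x \rangle \sim R_n$; a change of variables $y = x/R_n$ then yields the clean scalings
\[ \|u_n\|^p_{L^p_\delta} \sim R_n^{\,n+p\delta}, \qquad \|\nabla u_n\|^p_{L^p_\delta} \sim R_n^{\,n+p\delta-p}, \qquad \|\Delta u_n\|^p_{L^p_\delta} \sim R_n^{\,n+p\delta-2p}, \]
so that for $R_n$ large the $L^p_\delta$ contribution dominates the $W^{2,p}_\delta$-norm. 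After normalizing $v_n := u_n/\|u_n\|_{W^{2,p}_\delta}$, one has $\|v_n\|_{W^{2,p}_\delta} = 1$ and $\|\Delta v_n\|_{L^p_\delta} = O(R_n^{-2}) \to 0$, which is the second Weyl condition.

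For the weak convergence $v_n \rightharpoonup 0$, I would argue as follows. Since $W^{2,p}_\delta$ is reflexive for $p \in (1,\infty)$, subsequential weak limits exist. The supports of the $v_n$ escape every compact set, so $\int v_n \psi\,\mathrm{d}x \to 0$ for every $\psi \in C^\infty_0(\mathbb{R}^n)$. As $W^{2,p}_\delta$ embeds in $\mathcal{D}'(\mathbb{R}^n)$, any subsequential weak limit vanishes as a distribution and hence as an element of $W^{2,p}_\delta$, forcing full convergence $v_n \rightharpoonup 0$.

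The argument amounts to bookkeeping of scaling exponents and I do not anticipate a serious obstacle. The one small point worth taking care with is to support $\phi$ in an annulus rather than near the origin, which keeps the $L^p_\delta$-norm scaling uniform across all real $\delta$ (including $\delta$ very negative) and avoids splitting into cases.
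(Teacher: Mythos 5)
Your proposal is correct and follows essentially the same route as the paper, which merely remarks that one can construct Weyl sequences for $\Delta$ ``just as in the case of $\Delta: H^2 \rightarrow L^2$'' to place zero in the essential spectrum. Your explicit dilated-annulus sequence, the scaling bookkeeping (valid for every real $\delta$ thanks to the annular support), and the upper semi-Fredholm compactness characterization supply exactly the details the paper leaves implicit.
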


\subsection{Kondratiev spaces}

A slight variation of the above spaces are Kondratiev spaces, where the exponent in the weight $\langle x \rangle$ is increased by one every time we take a derivate. We denote them here by $M^{k,p}_{\delta}$, and defined them as the completion of $C^{\infty}_0( \mathbb{R}^n)$ under the norm
\[ \|u \|_{M^{k,p}_{\delta}} = \left( \sum_{|\alpha|\leq k} \| D^{\alpha} u \cdot \langle x \rangle^{\delta+|\alpha|} \|^p_{L^p} \right)^{1/p}.\]
Again we let $1<p<\infty$, $\delta \in \mathbb{R}$, and $k \in \mathbb{N}$.

In general, Kondratiev spaces are studied in connection with boundary value problems for elliptic equations in domains with critical points \cite{Kon67}. They also appear in the setting of unbounded domains. For example, Nirenberg and Walker showed in \cite{NW73} that a class of elliptic operators with coefficients that decay sufficiently fast at infinity have finite dimensional kernel. Additionally,  McOwen and Lockhart used this spaces to study Fredholm properties of elliptic operators and systems of elliptic operators in non-compact manifolds \cite{L81, LM83, LM85}. Moreover, Kondratiev spaces have also been used in the description of far field asymptotics for fluid problems, in particular when studying the flow past obstacles, since they lend themselves to the study of problems in exterior domains (see \cite{SNW86} for the case of $\mathbb{R}^3$ and \cite{MSU08} for an application towards bifurcation theory). More recently, a variant of these spaces was used in \cite{MR13} to study Poisson's equation in a one-periodic infinite strip $Z= [0,1] \times \mathbb{R}$. 
 
The main advantage for us is that in Kondratiev spaces the Laplace operator is a Fredholm operator. These results are shown in McOwen's paper \cite{M79} and are summarized in the following theorem.
\begin{theorem}\label{McOwen}
Let $1<p=\frac{q}{q-1}<\infty$,  $n \geq 2$, and $\delta \neq -2 + n/q + m $ or $\delta \neq -n/p -m $, for some $m \in \mathbb{N}$.  Then 
\[ 
\Delta: M^{2,p}_{\delta} \rightarrow L^p_{\delta+2}, 
\]
is a Fredholm operator and
\begin{enumerate}
\item for $-n/p < \delta < -2 + n/q$ the map is an isomorphism;
\item for $-2 + n/q + m < \delta< -2 + n/q + m+1$ , $m \in \mathbb{N}$, the map is injective with closed range equal to 
\[
R_m = \left\{ f \in L^p_{\delta +2} : \int f(y)H(y) =0 \  \text{for all } \ H \in \bigcup_{j=0}^m \mathcal{H}_j\right\}; 
 \]
\item for $-n/p - m -1 < \delta <-n/p -m $, $m \in \mathbb{N}$, the map is surjective with kernel equal to 
\[ N_m = \bigcup_{j=0}^m \mathcal{H}_j
.\]
\end{enumerate}
Here,  $\mathcal{H}_j $ denote the harmonic homogeneous polynomials of degree $j$.

On the other hand, if $\delta = -n/p - m $ or $\delta = -2 + n/q + m$ for some $m \in \mathbb{N}$, then $\Delta$ does not have closed range.
 \end{theorem}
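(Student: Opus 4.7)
The plan is to prove the theorem by combining a weighted a priori estimate derived from a dyadic decomposition with an explicit spherical harmonic analysis to identify kernel and cokernel. First I would establish, for $\delta$ not in the exceptional set, the estimate
\[
\|u\|_{M^{2,p}_\delta} \le C\bigl(\|\Delta u\|_{L^p_{\delta+2}} + \|u\|_{L^p(B_R)}\bigr) \qquad \text{for } u \in C^\infty_0(\mathbb{R}^n).
\]
The standard mechanism is to cut $\mathbb{R}^n$ into dyadic annuli $A_k = \{2^k \le |x| \le 2^{k+1}\}$, rescale each $A_k$ to a fixed reference annulus, apply the Calder\'on--Zygmund interior estimate for $\Delta$ on the reference, and sum over $k$ with the weight $\langle x \rangle^{\delta+|\alpha|}$. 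The scaling works cleanly precisely because $\Delta$ has homogeneity $-2$ in $|x|$ while the Kondratiev norm absorbs one power of $|x|$ per derivative, so the per-scale constant is independent of $k$.

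Second, I would upgrade this to Fredholmness. The a priori bound together with the compactness of the embedding $M^{2,p}_\delta(B_R) \hookrightarrow L^p(B_R)$ forces the kernel to be finite-dimensional and the range to be closed. A parallel estimate for the formal adjoint $\Delta : M^{2,q}_{-\delta-4} \to L^q_{-\delta-2}$ (acting between the dual spaces via the unweighted pairing, since $(L^p_{\delta+2})^* \cong L^q_{-\delta-2}$) produces the finite-dimensionality of the cokernel.

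Third, the explicit description of kernel and cokernel comes from separation of variables. Expanding $u = \sum_{j,m} u_{j,m}(r) Y_{j,m}(\theta)$ in spherical harmonics reduces $\Delta u = 0$ to the Euler equations
\[
u_{j,m}'' + \frac{n-1}{r} u_{j,m}' - \frac{j(j+n-2)}{r^2} u_{j,m} = 0,
\]
whose indicial roots are $\rho_j^+ = j$ and $\rho_j^- = 2 - n - j$. A direct computation of the Kondratiev norm against power weights shows that $r^j Y_{j,m}(\theta)$ lies in $M^{2,p}_\delta$ precisely when $j < -\delta - n/p$, which pins down the kernel in regime (3) as $\bigcup_{j=0}^m \mathcal{H}_j$. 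Dually, testing the equation $\Delta u = f$ against a harmonic polynomial $H \in \mathcal{H}_j$ and integrating by parts (the weighted H\"older pairing justifies this when $\delta$ sits in the admissible window) yields the moment conditions $\int f H \, dx = 0$ that define $R_m$ in regime (2); the isomorphism regime (1) is the common window where neither obstruction appears.

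The main obstacle will be the boundary cases $\delta = -n/p - m$ or $\delta = -2 + n/q + m$. At these values an indicial root sits exactly on the boundary of the admissible window, the a priori estimate fails, and the range is not closed. One exhibits this by constructing Weyl-type sequences $u_k(x) = \chi(x/2^k)\, r^{\rho}\, Y_{j,m}(\theta)$ with carefully chosen cutoffs so that $\|\Delta u_k\|_{L^p_{\delta+2}} / \|u_k\|_{M^{2,p}_\delta} \to 0$, ruling out any Fredholm estimate. The most delicate aspect of the argument is the accounting that lines up the indicial roots $j$ and $2-n-j$, the weight exponents $\delta + 2 - n/p$ and $\delta + n/q$, and the admissibility windows for each spherical harmonic mode so that the breakpoints of $\delta$ in the statement come out exactly.
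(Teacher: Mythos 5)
First, a point of reference: the paper does not prove this theorem at all --- it is quoted verbatim as a known result of McOwen \cite{M79}, so there is no internal proof to compare against. Your sketch is a reasonable outline of the classical Lockhart--McOwen strategy, and several of its pieces are correct and well-calibrated: the indicial roots $j$ and $2-n-j$ of the Euler equation are right, the membership criterion $j < -\delta - n/p$ for $r^jY_{j,m}\in M^{2,p}_\delta$ correctly reproduces the kernel $N_m$ in regime (3), the dual bookkeeping $\Delta: M^{2,q}_{-\delta-4}\to L^q_{-\delta-2}$ correctly produces the moment conditions defining $R_m$, and Weyl sequences built from $r^{\rho}Y_{j,m}$ with expanding cutoffs are indeed how one shows failure of closed range at the exceptional weights.

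The genuine gap is in the first step, and it is not cosmetic. The dyadic-annulus rescaling argument you describe --- rescale each $A_k$ to a reference annulus, apply the Calder\'on--Zygmund interior estimate, sum with the weights --- yields
\[
\|u\|_{M^{2,p}_\delta} \le C\bigl(\|\Delta u\|_{L^p_{\delta+2}} + \|u\|_{L^p_{\delta}}\bigr),
\]
with the \emph{global} weighted norm $\|u\|_{L^p_\delta}$ as the error term, not $\|u\|_{L^p(B_R)}$. This estimate holds for \emph{every} $\delta$ and is blind to the exceptional set, so it cannot be the source of Fredholmness: the embedding $M^{2,p}_\delta\hookrightarrow L^p_\delta$ gains no decay and is not compact, so Peetre's lemma does not apply, and if the mechanism worked as stated it would contradict the final clause of the theorem (failure of closed range at $\delta = -n/p-m$ and $\delta=-2+n/q+m$). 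The missing idea is the inversion of the model operator near infinity: after passing to $t=\log r$ and decomposing in spherical harmonics, the conjugated operator on each mode becomes a constant-coefficient ODE (equivalently, a Mellin multiplier) whose symbol is invertible on the weight line determined by $\delta+n/p$ precisely when no indicial root $j$ or $2-n-j$ lies on that line. This is what lets you solve $\Delta u = f$ exactly on $\{|x|>R\}$ with the correct weighted bounds, absorb the tail of the error term, and reduce the remainder to the compactly supported piece $\|u\|_{L^p(B_R)}$. Without that step the a priori estimate you wrote is asserted rather than derived, and the entire dependence of the Fredholm properties on $\delta$ --- which is the content of the theorem --- is unaccounted for.
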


\section{ Proof of Theorem \ref{Th:main}}\label{Proof}
To facilitate the analysis we will split this section into four parts. In Subection \ref{setup} we describe how we set up the problem and how we obtain a linearization which is easier to work with. Next, in Subsection \ref{proof} we state conditions that allow us to use the Implicit Function Theorem and derive expansions for the amplitude and phase, effectively proving the results of Theorem \ref{Th:main}. Finally, in the last two subsections we show that the linearization is invertible and the nonlinear operator associated to our problem is well defined.

\subsection{Set up}\label{setup}

We recall here our main equation, the complex Ginzburg-Landau equation in dimension 3,
\begin{equation}\label{CGL}
 A_t = (1+ i \alpha) \Delta A + A - ( 1 + i \gamma) A |A|^2 + i \ep g(x) A,\end{equation} 
where $g(x)$ is a localized real valued function and $\ep$ is small. In what follows we describe how we arrive at our linearization.

 We pass to a corotating frame $A = e^{- i \Omega t} \tilde{A},$  so that $\tilde{A}$ satisfies the following equation,
\begin{equation}\label{modCGL}
 \tilde{A}_t = (1+ i \alpha) \Delta \tilde{A} +( 1+ i \Omega)  \tilde{A} - ( 1 + i \gamma) \tilde{A} |\tilde{A}|^2 + i \ep g(x) \tilde{A}.\end{equation}

At parameter values $\Omega=\gamma$ and $\ep =0$, the function $\tilde{A}_*=1$ is a solution to \eqref{modCGL} and the linearization about this constant solution is given by the following operator, $T$:
\[ T \begin{bmatrix} s \\ \phi \end{bmatrix}= \begin{bmatrix} \Delta  -2 & - \alpha \Delta  \\ 
					\alpha \Delta  - 2 \gamma  & \Delta  \end{bmatrix} \begin{bmatrix} s \\ \phi \end{bmatrix}. \]
In Fourier space  $T$ can be represented by a matrix, $\mathcal{F}(T)(k)$, which at $k=0$ has eigenvalues $\lambda_1 = -2,$ and $\lambda_2 =0 $. This suggest that in order to simplify future computations we use the following change of coordinates, 
\[ \hat{s} = \gamma s, \quad \hat{\phi} = - \gamma s + \phi,\]
so as to diagonalize $\mathcal{F}(T)(0)$. The resulting operator that comes from the right hand side of the equations for $\hat{s}_t$ and $\hat{\phi}_t$, and which we label as $F: \mathcal{X} \times \mathbb{R} \rightarrow \mathcal{Y}$, is given by the following two components,
\begin{equation}\label{amplitude}
\begin{split}
F_1 (\hat{s},\hat{ \phi}) = &  ( 1- \alpha \gamma) \Delta \hat{s} -2 \hat{s}  - \gamma \alpha \Delta \hat{\phi}   - ( \gamma +\hat{s}) [ | \nabla \hat{s}|^2 +2 \nabla \hat{s} \cdot \nabla \hat{\phi} + | \nabla \hat{\phi}|^2] \\
&-2 \alpha | \nabla \hat{s}|^2-2 \alpha \nabla \hat{s} \cdot \nabla \hat{\phi} - \alpha \hat{ s} ( \Delta \hat{s} + \Delta \hat{\phi}) - \frac{3}{\gamma} \hat{s}^2 - \frac{1}{\gamma^2}\hat{s}^3,
\end{split}
\end{equation}
\begin{equation}\label{phase}
\begin{split}
 F_2(\hat{s}, \hat{\phi}) = & \left( \frac{\alpha}{\gamma} + \alpha \gamma \right) \Delta \hat{s} + ( 1 + \alpha \gamma) \Delta\hat{ \phi} + \alpha \hat{s} ( \Delta \hat{ s} + \Delta \hat{\phi}) + 2 \alpha \nabla \hat{s} \cdot \nabla \hat{\phi} \\
 &+( \gamma - \alpha + \hat{s}) \left[ | \nabla \hat{s}|^2 + 2 \nabla \hat{s} \cdot \nabla \hat{\phi} + | \nabla \hat{ \phi} |^2 \right] + 2 \alpha | \nabla \hat{s}|^2 + \frac{3 \hat{s}^2}{\gamma} + \frac{\hat{s}^3}{\gamma^2}  \\
 &+\frac{1}{(\gamma + \hat{s})} \left[  2 | \nabla \hat{s}|^2 + 2 \nabla \hat{s} \cdot \nabla \hat{\phi} - \hat{s}^2 - \frac{\hat{s}^3}{\gamma} - \frac{ \alpha}{\gamma} \hat{s} \Delta \hat{s}\right] + \ep g(x). 
 \end{split}
\end{equation}
We now introduce the following Ansatz for equation \eqref{CGL}
\begin{equation}\label{Ansatz} 
\tilde{A}(x,t, \ep) = S(x,\ep) \rme^{ \Phi(x,t,\ep) },
 \end{equation}
 where 
\begin{align*}
S(x,\ep) &= 1 + s(x,\ep), \\
  \Phi(x, \ep) &= - \rmi ( \gamma t - \phi(x,\ep) ),\quad \mbox{with} \quad     \phi(x,\ep) = \tilde{\phi}(x,\ep) + c(\ep) \underbrace{ \dfrac{\chi(|x|)}{|x|}}_P,
 \end{align*}
and  $\chi \in C^{\infty}(\mathbb{R})$ is a cut-off function equal to zero near the origin and equal to 1, for  $|x|>2$. This amounts to letting $\hat{\phi} = \tilde{\phi} + c P(x)$ in \eqref{amplitude} and \eqref{phase}, and results in a nonlinear operator which we again label as $F:\mathcal{X} \times \mathbb{R}^2 \rightarrow \mathcal{Y}$. In the last section we show that there exists spaces $\mathcal{X} $ and $\mathcal{Y}$ such that $F$ is well defined and smooth. We will also look at the properties of its linearization, $L: \mathcal{X} \times \mathbb{R} \rightarrow \mathcal{Y}$, in  Subsection  \ref{linear}, but we explicitly write the form of this linear operator for future reference here
  
  \begin{equation*}
L \begin{bmatrix} \hat{s} \\ \tilde{\phi} \\ c \end{bmatrix} = \begin{bmatrix} (1- \alpha \gamma) \Delta -2 & - \alpha \gamma \Delta & -\alpha \gamma \Delta P \\
																( \alpha \gamma + \frac{\alpha}{\gamma}) \Delta  &  ( 1 + \alpha \gamma) \Delta & ( 1 + \alpha \gamma) \Delta P \end{bmatrix}  \begin{bmatrix} \hat{s} \\ \tilde{\phi} \\ c \end{bmatrix}. \end{equation*}

We also clarify that in the rest of the paper we will write $s$ instead of  $\hat{s}$.

  \subsection{Main results: Expansions for phase $\phi$ and amplitude $s$}\label{proof}
For the remainder of the paper we let $\mathcal{X} = W^{2,2}_{\delta+2}\times M^{2,2}_{\delta}$ and $\mathcal{Y}=L^2_{\delta+2} \times L^2_{\delta+2}$.  The next proposition, together with the Implicit Function Theorem, show the existence of solutions to \eqref{CGL}. 
 
  \begin{proposition}\label{invertible}
Let $\delta \in (-1/2,1/2)$ and let $g \in L^2_{\delta +2}$. Then the operator $F: W^{2,2}_{\delta+2} \times M^{2,2}_{\delta} \times \mathbb{R}^2 \rightarrow L^2_{\delta+2} \times L^2_{\delta+2}$ defined by \eqref{amplitude} and \eqref{phase} and the Ansatz \eqref{Ansatz} is smooth and its Fr\'echet derivative $DF$ evaluated at $(s, \tilde{\phi}, c; \ep) = 0$, is invertible.
\end{proposition}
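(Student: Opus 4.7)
The proposition bundles two claims: smoothness of $F$ as a map $\mathcal X\times\mathbb R^2\to\mathcal Y$, and invertibility of the Fr\'echet derivative $L=DF|_{(0,0,0;0)}$. The invertibility is the real substance and is where the space choice together with the correction term $cP(x)$ pay off, so I would tackle it first.

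For invertibility my plan is to row-reduce the $2\times3$ operator matrix defining $L$. Both rows carry the $\tilde\phi$ and $c$ terms with the same coefficient ratio $-\alpha\gamma:(1+\alpha\gamma)$, so adding $(1+\alpha\gamma)/(\alpha\gamma)$ times the amplitude row to the phase row eliminates $\Delta\tilde\phi$ and $c\,\Delta P$ in one stroke. A short computation yields a scalar equation
\[
\Delta s-\mu\,s=\tilde f,\qquad \mu=\frac{2(1+\alpha\gamma)}{1+\alpha^2},
\]
in which $\mu>0$ thanks to the hypothesis $1+\alpha\gamma>0$; Proposition \ref{Sobolev} then supplies a unique $s\in W^{2,2}_{\delta+2}$. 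Substituting back into the phase row leaves
\[
\Delta\tilde\phi+c\,\Delta P=h\in L^2_{\delta+2}.
\]
Theorem \ref{McOwen} case (2) with $m=0$ says that for $\delta\in(-1/2,1/2)$ the map $\Delta:M^{2,2}_\delta\to L^2_{\delta+2}$ is injective with closed range $\{f:\int f\,dx=0\}$. The divergence theorem on a large ball gives $\int\Delta P\,dx=-4\pi$, and because $\delta>-1/2$ the weight $\langle x\rangle^{-\delta-2}$ is square-integrable, so $L^2_{\delta+2}\hookrightarrow L^1$ and $\int h\,dx$ is well defined. Choosing $c=-(4\pi)^{-1}\int h\,dx$ forces $h-c\,\Delta P$ into the range of $\Delta$, from which $\tilde\phi$ is uniquely determined; together these reconstructions give the bounded inverse of $L$.

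For smoothness, $F$ is polynomial in the jet $s,\nabla s,\Delta s,\tilde\phi,\nabla\tilde\phi,\Delta\tilde\phi$ with smooth bounded coefficients built from $P,\nabla P$ and a compactly supported $\Delta P$, so the task reduces to checking that each quadratic and cubic product lies in $L^2_{\delta+2}$. The pure-amplitude terms $s^2,s^3,s\Delta s$ are handled by pulling out an $L^\infty$ factor via the three-dimensional Sobolev embedding $W^{2,2}(\mathbb R^3)\hookrightarrow L^\infty$. The gradient products $|\nabla s|^2$, $\nabla s\cdot\nabla\tilde\phi$, $|\nabla\tilde\phi|^2$ are treated with a weighted Sobolev embedding of the form $W^{1,2}_{\sigma}(\mathbb R^3)\hookrightarrow L^6_\sigma$ combined with a three-factor H\"older inequality, splitting the output weight $\langle x\rangle^{\delta+2}$ across the two gradient factors; the residual negative power of $\langle x\rangle$ is integrable to the required H\"older exponent precisely when $\delta\in(-1/2,1/2)$, which is what pins down the hypothesis on $\delta$. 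The detailed verification is postponed to Subsection \ref{linear} and the subsequent subsection on the nonlinearity.

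The hardest part is getting the Kondratiev weights on $\tilde\phi$ and the ordinary weights on $s$ to dovetail everywhere at once: they are tuned so that row-reducing $L$ drops the $s$-equation into the isomorphism regime of Proposition \ref{Sobolev} while leaving a pure Laplacian on $\tilde\phi$ in the one-codimensional Fredholm regime of Theorem \ref{McOwen}, with the real parameter $c$ supplying exactly the missing cokernel direction via the cut-off Newtonian potential $P$. Passing to $\delta\geq 1/2$ would break the $L^1$-embedding that makes the choice of $c$ meaningful (and simultaneously cost the control of $|\nabla\tilde\phi|^2$ in the smoothness estimate), which is precisely the obstruction recorded in Remark 1 following Theorem \ref{Th:main}.
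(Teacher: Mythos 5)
Your invertibility argument is essentially the paper's own, just phrased as an explicit solve rather than through Fredholm language: the paper first substitutes the phase equation into the amplitude equation to land on $[(1+\alpha^2)\Delta-2(1+\alpha\gamma)]s=\cdots$ (invertible by Proposition \ref{Sobolev}), identifies the cokernel of the resulting $2\times 2$ block $\hat L$ as $\rmspan\{(0,1)^T\}$ via Theorem \ref{McOwen} with $m=0$, and then invokes the Bordering Lemma using $\int\Delta P=-4\pi$; your row reduction and the explicit choice $c=-(4\pi)^{-1}\int h$ accomplish exactly the same thing, and your use of $L^2_{\delta+2}\hookrightarrow L^1$ for $\delta>-1/2$ to make $\int h$ meaningful is correct. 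This half of the proposal is sound.

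The gap is in the smoothness half. You treat $\nabla P$ as a harmless ``smooth bounded coefficient,'' but boundedness is not enough in a weighted target space: the term $c^2|\nabla P|^2$ carries no decaying factor from $s$ or $\tilde\phi$, behaves like $|x|^{-4}$ at infinity, and lies in $L^2_{\delta+2}(\mathbb{R}^3)$ only when $\int_1^\infty r^{2(\delta+2)-8}r^2\,\rmd r<\infty$, i.e.\ precisely when $\delta<1/2$. This single term is what the paper's Proposition \ref{prop:Nonlinearity} singles out and computes, and it is the actual source of the upper bound on $\delta$ in the nonlinear analysis; your proposal never checks it. Relatedly, your closing diagnosis is backwards: passing to $\delta\ge 1/2$ does not break the embedding $L^2_{\delta+2}\hookrightarrow L^1$ (that embedding needs $\delta>-1/2$ and only improves as $\delta$ grows), and the products $|\nabla s|^2$, $\nabla s\cdot\nabla\tilde\phi$, $|\nabla\tilde\phi|^2$ are handled by the paper's Lemma \ref{Holder} for all $\delta$; what fails at $\delta=1/2$ is (a) the closed range of $\Delta:M^{2,2}_\delta\to L^2_{\delta+2}$ and, beyond it, the growth of the cokernel past one dimension, and (b) the membership of $|\nabla P|^2$ in $L^2_{\delta+2}$. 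Finally, $F$ is not polynomial in the jet: $F_2$ contains the factor $1/(\gamma+s)$, so you need the embedding $W^{2,2}_{\delta+2}\hookrightarrow BC(\mathbb{R}^3)$ and smallness of $s$ to keep $\gamma+s$ away from zero before invoking smoothness of superposition operators.
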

We leave the proof of this result for Subsection \ref{linear} and justify the expansions and decay rates of $S(x,\ep)$ and $\Phi(x,t,\ep)$ stated in Theorem \ref{Th:main}. First, the decay rates follow from our choice of weighted spaces and the following two lemmas.

\begin{lemma}\label{decay}
Let $\gamma>-3/2$. If $f \in M^{2,2}_{\gamma}$, then $|f(x)| \leq  {C}  \langle {\bf x} \rangle^{-\gamma-3/2}$ as $| {\bf x} |\rightarrow \infty$.
\end{lemma}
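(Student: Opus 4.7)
The plan is to prove the pointwise decay by a standard dyadic/annular rescaling combined with the Sobolev embedding $H^{2}(B) \hookrightarrow L^{\infty}(B)$ valid in $\mathbb{R}^{3}$.

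For $R$ large, consider the annulus $A_{R} = \{x \in \mathbb{R}^{3} : R \leq |x| \leq 2R\}$ and the fixed reference annulus $B = \{y \in \mathbb{R}^{3} : 1 \leq |y| \leq 2\}$. Define the rescaled function $\tilde f(y) = f(Ry)$ on $B$. The chain rule gives $\nabla \tilde f(y) = R (\nabla f)(Ry)$ and $D^{2} \tilde f(y) = R^{2}(D^{2} f)(Ry)$, while the change of variables $x = R y$ produces a Jacobian factor $R^{-3}$ when we integrate over $B$. Since $\langle x \rangle \sim R$ uniformly on $A_{R}$, the weight $\langle x \rangle^{2(\gamma + |\alpha|)}$ can be inserted into or extracted from each integral at the cost of a constant times $R^{2(\gamma + |\alpha|)}$.

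Carrying this out for each $|\alpha| \leq 2$, every term in the $M^{2,2}_{\gamma}$ norm contributes exactly the same power of $R$: concretely, for each multi-index with $|\alpha| \leq 2$,
\[
\int_{B} |D^{\alpha} \tilde f|^{2}\, dy \;=\; R^{2|\alpha| - 3} \int_{A_{R}} |D^{\alpha} f(x)|^{2}\, dx \;\leq\; C\, R^{-3 - 2\gamma}\int_{A_{R}} |D^{\alpha} f(x)|^{2} \langle x\rangle^{2(\gamma + |\alpha|)}\, dx.
\]
Summing over $|\alpha| \leq 2$ yields $\|\tilde f\|_{H^{2}(B)}^{2} \leq C\, R^{-3 - 2\gamma}\, \|f\|_{M^{2,2}_{\gamma}}^{2}$. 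Applying the Sobolev embedding $H^{2}(B) \hookrightarrow L^{\infty}(B)$ (which holds in dimension $3$) and undoing the rescaling gives
\[
\sup_{R \leq |x| \leq 2R} |f(x)| \;=\; \|\tilde f\|_{L^{\infty}(B)} \;\leq\; C\, R^{-\gamma - 3/2}\, \|f\|_{M^{2,2}_{\gamma}}.
\]
Since $R$ is arbitrary (and $\langle x \rangle \sim R$ on $A_{R}$), a dyadic sup over $R = 2^{k}$ yields $|f(x)| \leq C \langle x \rangle^{-\gamma - 3/2}$ for $|x|$ large, which is the claimed bound. The assumption $\gamma > -3/2$ is what makes this bound a genuine decay statement; the rescaling argument itself is formally insensitive to the sign of $\gamma + 3/2$.

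The main (very mild) obstacle is just bookkeeping: verifying that the powers of $R$ coming from the differentiation, from the change of variables, and from the weight $\langle x\rangle^{\delta + |\alpha|}$ all line up to the same exponent $R^{-3 - 2\gamma}$ independently of $|\alpha|$. This balance is precisely why Kondratiev spaces — in which each extra derivative is matched by one extra power of $\langle x \rangle$ in the weight — are the natural setting for this scaling invariance, and it is the only real input beyond the standard Sobolev embedding.
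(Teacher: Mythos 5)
Your proof is correct, but it takes a genuinely different route from the paper's. The paper works in polar coordinates: it writes $f(\theta,R)=\int_\infty^R f_r(\theta,s)\,\rmd s$, applies Cauchy--Schwarz with the weight split between the two factors to get $\|f(\cdot,R)\|_{L^2(S^2)}\leq CR^{-\gamma-3/2}$ and an analogous (stronger) bound for $\|f_{\theta}(\cdot,R)\|_{L^2(S^2)}$, and then upgrades to a pointwise bound on each sphere via the Agmon-type interpolation inequality $\|u\|_\infty^2\leq\|u\|_{L^2}\|u\|_{H^1}$. In that argument the hypothesis $\gamma>-3/2$ is used concretely, to make $\int_R^\infty s^{-2(\gamma+2)}\,\rmd s$ finite; the proof also only consumes the derivatives $f_r$ and $f_{\theta r}$, so it shows the full $M^{2,2}_\gamma$ norm is slightly more than is needed. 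Your dyadic-annulus rescaling plus $H^2(B)\hookrightarrow L^\infty(B)$ is the more standard and more portable argument: the exponent bookkeeping ($R^{2|\alpha|}$ from the chain rule, $R^{-3}$ from the Jacobian, $R^{-2(\gamma+|\alpha|)}$ from the weight) does line up to $R^{-3-2\gamma}$ uniformly in $|\alpha|$, the constant in the embedding is independent of $R$ because the reference annulus is fixed, and the whole scheme generalizes verbatim to any dimension $n$ and any $M^{k,p}_\gamma$ with $k>n/p$, yielding $|f(x)|\lesssim\langle x\rangle^{-\gamma-n/p}$. You correctly observe that your estimate holds for all $\gamma$ and that $\gamma>-3/2$ only makes it a decay statement, whereas in the paper's proof that hypothesis is structurally necessary. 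The one small point worth making explicit (as the paper does) is the reduction to $f\in C_0^\infty$ by density, so that the pointwise supremum on each annulus passes to the completion; with that said, your argument is complete.
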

\begin{proof}
Since we define the space $M^{2,2}_{\gamma}$ as the completion of $C^{\infty}_0$ under the norm $\|\cdot\|_{M^{2,2}_{\gamma}}$, it suffices to show the result for $f \in C^{\infty}_0$. Using polar coordinates we find that in dimension 3,
\begin{align*}
\int |f(\theta, R)|^2 \; \rmd \theta &= \int \left(  \int^R_{\infty} |f_r(\theta, s)| \; \rmd s \right)^2 \; \rmd \theta \\ 
&= \int \left( \int_{\infty}^R s^{-(\gamma+2)} |f_r(\theta,s)| s^{\gamma+1} s \;\rmd s \right)^2 \; \rmd \theta\\
& \leq \int \left ( \int^R_{\infty} s^{-2(\gamma+2)} \; \rmd s\right) \left( \int_{\infty}^R s^{2(\gamma+1)}|f_r(\theta,s)|^2 s^2 \; \rmd s \right) \; \rmd \theta\\
&\leq R^{-2(\gamma+2)+1} \| f_r \|_{L^2_{\gamma+1}}.
\end{align*}
Therefore $\|f(\cdot,R)\|_{L^2} \leq C R^{-\gamma -3/2}$. Similarly,
\begin{align*}
\int |f_{\theta} (\theta, R)|^2 \; \rmd \theta &= \int \left(  \int^R_{\infty} |f_{\theta r}(\theta, s)| \; \rmd s \right)^2 \; \rmd \theta\\
& = \int \left( \int_{\infty}^R s^{-(\gamma+3)} |f_{\theta r}(\theta,s)| s^{\gamma+2} s \;\rmd s \right)^2 \; \rmd \theta\\
& \leq \int \left ( \int^R_{\infty} s^{-2(\gamma+3)} \; \rmd s\right) \left( \int_{\infty}^R s^{2(\gamma+2)}|f_{\theta r} (\theta,s)|^2 s^2 \; \rmd s \right) \; \rmd \theta\\
&\leq R^{-2(\gamma+3)+1} \| f_{\theta r} \|_{L^2_{\gamma+2}}.
\end{align*}
Combining these results and using the interpolation inequality from \cite[Thm 5.9]{Adams},
\[ \|f(\cdot, R)\|_{\infty}^2 \leq \| f(\cdot,R)\|_{L^2} \| f(\cdot, R)\|_{H^1},\]
shows the result of the claim.
\end{proof}
The next lemma can be proven in a similar manner.
\begin{lemma}\label{decay2}
Let $\gamma >-1/2$. If $f \in W^{2,2}_{\gamma}$, then $|f(x)| \leq  {C} \langle {\bf x} \rangle^{-\gamma-1/2}$ as $| {\bf x} |\rightarrow \infty$.
\end{lemma}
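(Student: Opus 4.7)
The plan is to mimic the proof of Lemma \ref{decay} essentially verbatim, adjusting only the weights to reflect the fact that in the weighted Sobolev space $W^{2,2}_\gamma$ every derivative is weighted by $\langle x \rangle^\gamma$ rather than by $\langle x \rangle^{\gamma+|\alpha|}$. As before, by density of $C^\infty_0(\mathbb{R}^3)$ in $W^{2,2}_\gamma$ it suffices to establish the pointwise bound for $f \in C_0^\infty(\mathbb{R}^3)$, which guarantees that all derivatives vanish at infinity and that the fundamental theorem of calculus can be integrated from $R$ out to $\infty$.

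First I would fix $R>0$, pass to spherical coordinates $x = (r,\theta)$ with $\theta \in \mathbb{S}^2$, and write
\[
f(\theta,R) = -\int_R^\infty f_r(\theta,s)\,\rmd s.
\]
Splitting $|f_r| = s^{-(\gamma+1)} \cdot \bigl(|f_r|\, s^{\gamma+1}\bigr)$ and applying Cauchy--Schwarz yields
\[
\left(\int_R^\infty |f_r|\,\rmd s\right)^{2} \leq \left(\int_R^\infty s^{-2(\gamma+1)}\,\rmd s\right) \left(\int_R^\infty |f_r|^2\, s^{2\gamma+2}\,\rmd s\right).
\]
The first factor equals $R^{-2\gamma-1}/(2\gamma+1)$ precisely when $\gamma > -1/2$, and integrating over $\mathbb{S}^2$ the second factor is bounded by $\|f_r\|_{L^2_\gamma}^2$ once the extra $s^2$ from the spherical volume element is accounted for. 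This produces $\|f(\cdot,R)\|_{L^2(\mathbb{S}^2)} \leq C R^{-\gamma-1/2}$.

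Next I would apply the identical decomposition to the angular derivative $f_\theta(\theta,R) = -\int_R^\infty f_{\theta r}(\theta,s)\,\rmd s$, using that the mixed second derivative $f_{\theta r}$ is controlled by the second-order part of the $W^{2,2}_\gamma$ norm (it has weight $\gamma$, not $\gamma+2$ as in the Kondratiev case). The same Cauchy--Schwarz estimate delivers $\|f_\theta(\cdot,R)\|_{L^2(\mathbb{S}^2)} \leq C R^{-\gamma-1/2}$. Feeding both bounds into the interpolation inequality
\[
\|f(\cdot,R)\|_{L^\infty(\mathbb{S}^2)}^2 \leq C \|f(\cdot,R)\|_{L^2(\mathbb{S}^2)} \|f(\cdot,R)\|_{H^1(\mathbb{S}^2)}
\]
invoked in the previous lemma then produces the claimed pointwise bound $|f(x)| \leq C |x|^{-\gamma-1/2}$ for large $|x|$, which can be upgraded to the statement with $\langle x\rangle$ by a trivial estimate on any compact annulus.

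I do not anticipate a substantive obstacle: the argument is a one-to-one transcription of the proof of Lemma \ref{decay}. The single bookkeeping change is that the radial integral $\int_R^\infty s^{-2(\gamma+1)}\,\rmd s$ replaces $\int_R^\infty s^{-2(\gamma+2)}\,\rmd s$, shifting the convergence threshold from $\gamma>-3/2$ to $\gamma>-1/2$ and weakening the decay rate by one power of $s$, which is exactly the discrepancy between the Kondratiev and weighted Sobolev scalings and also the reason the sharper asymptotics in Theorem \ref{Th:main} require Kondratiev spaces. The only detail requiring care is the translation of Cartesian second derivatives into the mixed spherical derivative $f_{\theta r}$, but this is the same standard computation used in Lemma \ref{decay}.
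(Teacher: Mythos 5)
Your radial step transcribes the proof of Lemma \ref{decay} correctly: since $\partial_r f = \tfrac{x}{|x|}\cdot \nabla f$ carries no extra factor of $r$, the splitting $|f_r| = s^{-(\gamma+1)}\cdot |f_r|s^{\gamma}\cdot s$ and Cauchy--Schwarz do give $\|f(\cdot,R)\|_{L^2(\rmd\theta)} \leq C R^{-\gamma-1/2}$ for $\gamma>-1/2$. The gap is in the angular step, which you dismiss as ``the same standard computation.'' It is not. Writing $\partial_\theta = \sum_i \tfrac{\partial x_i}{\partial \theta}\,\partial_i$ with $\tfrac{\partial x_i}{\partial\theta} = \rmO(r)$, one has $|f_{\theta r}| \lesssim |\nabla f| + r\,|D^2 f|$. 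In the Kondratiev space $M^{2,2}_\gamma$ each additional derivative carries one additional power of $\langle x\rangle$, and that extra power exactly absorbs the factor $r$; this is why the quantity $\int |f_{\theta r}|^2 s^{2(\gamma+1)} s^2\,\rmd s\,\rmd\theta$ appearing in the proof of Lemma \ref{decay} is finite there. In $W^{2,2}_\gamma$ all derivatives carry the \emph{same} weight $\langle x\rangle^{\gamma}$, so controlling $\int |f_{\theta r}|^2 s^{2\gamma}s^2\,\rmd s\,\rmd\theta$ would require $D^2 f \in L^2_{\gamma+1}$, which the hypothesis does not provide. Running your Cauchy--Schwarz with the weight the space actually controls gives only $\|f_\theta(\cdot,R)\|_{L^2(\rmd\theta)} \leq C R^{-\gamma+1/2}$, and the interpolation then yields $|f(x)|\leq C|x|^{-\gamma}$, half a power short of the claim.

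This is not a repairable bookkeeping slip: the extra half power is genuinely unavailable in $W^{2,2}_\gamma$. Take $\psi \in C^\infty_0(B_1)$ with $\psi(0)=1$ and set $f = \sum_{k\geq k_0} k^{-1} 2^{-k\gamma}\,\psi(x-x_k)$ with $|x_k| = 2^k$; the supports are disjoint, each term contributes $\approx k^{-2}$ to $\|f\|^2_{W^{2,2}_\gamma}$ since $\langle x\rangle \approx 2^k$ on its support, so $f\in W^{2,2}_\gamma$, yet $f(x_k) = k^{-1}|x_k|^{-\gamma}$, which is not $\rmO(|x_k|^{-\gamma-1/2})$. What does survive is the weaker bound $|f(x)| = \rmo(\langle x\rangle^{-\gamma})$, obtained directly by observing that $f\langle x\rangle^{\gamma} \in H^2(\mathbb{R}^3)\hookrightarrow C_0(\mathbb{R}^3)$; the extra $1/2$ is recovered only for spherical $L^2$-averages, or under radial symmetry, or in the Kondratiev scaling of Lemma \ref{decay}. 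Since the paper's own justification of Lemma \ref{decay2} is the single sentence ``can be proven in a similar manner,'' you have faithfully reproduced the intended argument --- but the analogy breaks precisely at the point you flagged as routine, and the discrepancy between the two weight scalings that you correctly identify as shifting the convergence threshold is also what invalidates the angular estimate here.
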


Next, to show the expansion for the function $c(\ep) = \ep c_1+ \rmO(\ep^2)$ stated in Theorem \ref{Th:main} we use Lyapunov-Schmidt reduction and the results of the next subsection, where we show that the vector $ (0,1)^T$, spans the cokernel of the operator $\hat{L}: W^{2,2}_{\delta+2} \times M^{2,2}_{\delta} \rightarrow L^2_{\delta+2} \times L^2_{\delta+2}$ defined by the first two columns of $L$.  If we assume expansions of the form $(s,\tilde{\phi},c)(x;\ep) = \ep (s_1,\tilde{\phi}_1,c_1) + \rmO(\ep^2)$, we can obtain at order $\rmO(\ep)$ an expression for the coefficient $c_1$:
\begin{align*}
- \int g\; \rmd x =& \int ( \alpha \gamma + \frac{\alpha}{\gamma}) \Delta s_1  +  ( 1 + \alpha \gamma) \Delta \tilde{\phi}_1 + c_1( 1 + \alpha \gamma) \Delta P \; \rmd x \\
 = & - 4 \pi  ( 1 + \alpha \gamma) c_1\\
 c_1 = &  \dfrac{ \int g\; \rmd x }{ 4 \pi  ( 1 + \alpha \gamma)},
 \end{align*}
where the last two equalities follow from Theorem \ref{McOwen} and the fact that
\[ \int  \Delta \left(  \frac{\chi(|x|)}{|x|} \right)  dx = -4\pi.\]

\subsection{The Linear operator}\label{linear}
In this subsection we prove Proposition \ref{invertible}, by decomposing the linear operator $L$ as $L = [\hat{L},M]$. First, we  use the results from Section \ref{weighted}  to show that  the operator,  $\hat{L}: W^{2,2}_{\delta+2}\times M^{2,2}_{\delta} \rightarrow L^2_{\delta+2} \times L^2_{\delta+2}$, defined below, is  Fredholm with index $-1$. Next, we show that the Ansatz \eqref{Ansatz} adds good far field corrections so that the linearization, $L: W^{2,2}_{\delta+2}\times M^{2,2}_{\delta}\times \mathbb{R} \rightarrow L^2_{\delta+2} \times L^2_{\delta+2}$ is an invertible operator. We define $\hat{L}$ explicitly for future reference:
\begin{equation}\label{linearization}
 \hat{L} \begin{bmatrix} s \\ \phi \end{bmatrix}= \begin{bmatrix} (1- \alpha \gamma) \Delta  -2 & - \gamma \alpha \Delta  \\ 
														\left (\gamma \alpha + \frac{\alpha}{\gamma} \right) \Delta   & ( 1 + \gamma \alpha) \Delta  \end{bmatrix} \begin{bmatrix}s  \\  \phi \end{bmatrix}. \end{equation}

 \begin{lemma}\label{lem:Fredholm}
 Let $\delta \in (-1/2,1/2)$,  and $1 +\gamma \alpha>0$. Then the linear operator $\hat{L} : W^{2,2}_{\delta+2} \times M^{2,2}_{\delta} \rightarrow L^2_{\delta+2} \times L^2_{\delta+2}$, defined by \eqref{linearization} is a Fredholm operator with index $i =-1$ and cokernel spanned by the vector  $(0,1)^T$.
 
 \end{lemma}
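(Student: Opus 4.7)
My plan is to exploit the constant-coefficient structure of $\hat L$ and row-reduce the matrix operator to decouple the $s$-equation from the $\phi$-equation, so that the two already-available Fredholm results (Proposition \ref{Sobolev} for $\Delta - a$ on weighted Sobolev spaces, and Theorem \ref{McOwen} for $\Delta$ on Kondratiev spaces) can be applied in sequence. Concretely, I would add $\frac{\gamma\alpha}{1+\gamma\alpha}$ times the second row of $\hat L$ to the first row; the $(1,2)$ entry vanishes because $-\gamma\alpha\Delta + \frac{\gamma\alpha}{1+\gamma\alpha}(1+\gamma\alpha)\Delta = 0$, while the $(1,1)$ entry becomes $\frac{1+\alpha^2}{1+\alpha\gamma}\Delta - 2$, using the identity $(1-\alpha\gamma)(1+\alpha\gamma) + \alpha^2(\gamma^2+1) = 1+\alpha^2$. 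This row operation is the composition of $\hat L$ with a bounded invertible operator $T$ on the target $L^2_{\delta+2}\times L^2_{\delta+2}$, hence preserves kernel, range, and Fredholm index, and since $T$ fixes the second component, it also preserves the annihilator of the range along the second coordinate.

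With the triangular form $\hat L' = T\hat L$ in hand, the first equation $\frac{1+\alpha^2}{1+\alpha\gamma}\Delta s - 2 s = g_1$ is uniquely solvable in $W^{2,2}_{\delta+2}$ for each $g_1\in L^2_{\delta+2}$: the hypothesis $1+\alpha\gamma>0$ gives a positive coefficient on $\Delta$, so after dividing through the operator has the form $\Delta - a$ with $a>0$ and Proposition \ref{Sobolev} applies. For the second equation, after substituting this $s$, we are left with $(1+\gamma\alpha)\Delta\phi = f_2 - (\gamma\alpha+\alpha/\gamma)\Delta s$, and I would apply Theorem \ref{McOwen} with $n=3$, $p=q=2$, $m=0$: since $\delta\in(-1/2,1/2) = (-2+n/q, -2+n/q+1)$, the operator $\Delta:M^{2,2}_{\delta}\to L^2_{\delta+2}$ is injective with closed range of codimension one, the range being $\{h:\int h\,dx = 0\}$ because $\mathcal H_0$ consists of constants.

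The final step, and the main thing that needs checking, is that the range condition simplifies to $\int f_2\,dx = 0$ alone, i.e.\ that $\int \Delta s\,dx = 0$ for every $s\in W^{2,2}_{\delta+2}$. I would argue this by density: on $C_0^\infty$ the integral is trivially zero, and the map $s\mapsto \int\Delta s\,dx$ extends continuously to $W^{2,2}_{\delta+2}$ since Cauchy--Schwarz gives
\[
\Bigl|\int \Delta s\,dx\Bigr| \le \|\Delta s\|_{L^2_{\delta+2}}\,\bigl\|\langle x\rangle^{-(\delta+2)}\bigr\|_{L^2(\mathbb R^3)},
\]
with the second factor finite precisely because $2(\delta+2)>3$ for $\delta>-1/2$. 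Combining the three steps: $\ker\hat L' = 0$, $\mathrm{range}(\hat L') = \{(g_1,g_2):\int g_2 = 0\}$, and transporting back by $T^{-1}$ (which acts as the identity on the second coordinate) gives $\mathrm{range}(\hat L) = \{(f_1,f_2):\int f_2\,dx = 0\}$. Hence $\hat L$ is Fredholm with $\mathrm{ind}(\hat L) = 0-1 = -1$ and cokernel annihilated by the functional $(f_1,f_2)\mapsto \int f_2$, which is represented by the vector $(0,1)^T$. The main obstacle I anticipate is precisely the density/continuity justification that $\int\Delta s = 0$ on $W^{2,2}_{\delta+2}$, and carefully tracking that the row reduction genuinely corresponds to multiplication on the left by an isomorphism of the target so that Fredholm data transfer cleanly.
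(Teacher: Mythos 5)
Your proof is correct and rests on the same two pillars as the paper's: Proposition \ref{Sobolev} for the strictly elliptic operator $(1+\alpha^2)\Delta-2(1+\alpha\gamma)$ on $W^{2,2}_{\delta+2}$, and Theorem \ref{McOwen} with $n=3$, $p=q=2$, $m=0$ for $\Delta:M^{2,2}_{\delta}\to L^2_{\delta+2}$, whose range for $\delta\in(-1/2,1/2)$ is the zero-average subspace. The difference is in how the elimination is organized, and it is worth recording. The paper solves the second row for $\Delta\phi$, substitutes into the first to obtain the same $s$-equation you derive, and then back-substitutes $s$ into the $\phi$-equation; this leaves the source term for $\Delta\phi$ in the form $Ag+\Delta(\cdots)f$ with $A=[(1+\alpha^2)\Delta-2(1+\alpha\gamma)]^{-1}[(1-\alpha\gamma)\Delta-2]$, and the zero-average condition is then tracked through $A$ by computing its Fourier symbol and checking $\hat{A}(0)\neq 0$. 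Your left-multiplication by the unipotent matrix $T$ leaves the second row untouched, so the only auxiliary fact you need is that $\int\Delta s\,\rmd x=0$ for all $s\in W^{2,2}_{\delta+2}$ --- a fact the paper also relies on (for its $f$-term) but asserts without proof, whereas your density plus Cauchy--Schwarz argument makes it explicit and shows it uses precisely $\delta>-1/2$. Your route therefore dispenses with the Fourier-symbol computation at the cost of isolating this one delicate point; both arguments then conclude identically that the kernel is trivial (unique solvability of the $s$-equation followed by injectivity of $\Delta$ on $M^{2,2}_{\delta}$ in this range), that the range is $\{(f_1,f_2):\int f_2\,\rmd x=0\}$, and hence that $\hat{L}$ is Fredholm of index $-1$ with cokernel spanned by $(0,1)^T$.
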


\begin{proof}
Assume 
\begin{equation}\label{Leq}
\begin{bmatrix} s\\ \phi \end{bmatrix} = \begin{bmatrix} f\\g \end{bmatrix}.
\end{equation}
 From the second component of $L$ we obtain and equation for the variable $\phi$,
\begin{equation} \label{phieq}
 \Delta \phi = \frac{g}{1 + \alpha \gamma} - \frac{ \alpha \gamma + \alpha / \gamma}{ 1 + \alpha \gamma} \Delta s.
 \end{equation}
Since $1+ \alpha \gamma >0$, we can insert the above expression for $\Delta \phi$ into the first line of equation \eqref{Leq} and solve for $s$:
\[ s = [(1 + \alpha^2)\Delta -2(1+\alpha \gamma)]^{-1} (1 + \alpha \gamma)f + [(1 + \alpha^2) \Delta -2(1+\alpha \gamma)]^{-1} \alpha \gamma g.\]
Next, we use the above result in \eqref{phieq} and obtain the following equation for $\phi$:
\[ \Delta \phi =  [ (1 + \alpha^2)\Delta -2(1+\alpha \gamma)]^{-1} [ ( 1- \alpha \gamma) \Delta  -2 ] g + \Delta  [(1 + \alpha^2) \Delta -2(1+\alpha \gamma)]^{-1} (1 + \alpha \gamma) f.\]
Our goal is to show that the right hand side is in the range of $\Delta: M^{2,2}_{\delta} \rightarrow L^2_{\delta+2}$. It is clear that the term 
\[  \Delta  [(1 + \alpha^2) \Delta -2(1+\alpha \gamma)]^{-1} (1 + \alpha \gamma) f, \]
 satisfies this requirement for any $f \in L^2_{\delta+2}$, given that it involves the Laplacian and that the operator $ [(1 + \alpha^2)\Delta -2(1+\alpha \gamma) ]^{-1} : L^2_{\delta +2} \rightarrow W^{2,2}_{\delta+2}$ is bounded. 

The results from Theorem \ref{McOwen} and our assumption that $\delta \in ( -1/2,1/2)$ require us to show that if  $g$ has average zero, then the term
\[(1 + \alpha^2)\Delta -2(1+\alpha \gamma)]^{-1} [ ( 1- \alpha \gamma) \Delta  -2 ] g \]
 also has average zero. The result follows since the operator, $A : L^2_{\delta + 2} \rightarrow L^2_{\delta +2}$ defined by $$A = [ (1 + \alpha^2)\Delta -2(1+\alpha \gamma)]^{-1} [ ( 1- \alpha \gamma) \Delta  -2 ]$$ preserves this condition. To see this, notice that the condition $\int g=0$ is equivalent to $\hat{g}(0) =0$, where $\hat{g}$ denotes the Fourier transform of $g$. Moreover, since the Fourier symbol of $A$ is given by
\[ \hat{A}(k) = \frac{(1- \alpha \gamma )|k|^2 + 2}{ ( 1+ \alpha^2) |k|^2 +2(1 + \alpha \gamma)},\]
and $1 + \alpha \gamma >0$, then  $\mathcal{F} ( A g)(0) =0$ if and only if $g(0)=0$. This proves the Lemma.

\end{proof}
\begin{remark}
Observe that the condition $1 + \alpha \gamma>0$ is also required for spectral stability, an indication that these methods are consistent with previous results.
\end{remark}
\begin{remark}
If $\delta \in (-3/2,-1/2)$ the Laplace operator is invertible.  A similar argument as in Lemma \ref{lem:Fredholm} then shows that for these values of $\delta$ the operator $\hat{L}: W^{2,2}_{\delta+2} \times M^{2,2}_{\delta} \rightarrow L^2_{\delta+2} \times L^2_{\delta+2}$ is invertible.
\end{remark}

Next, consider the  Ansatz: 
\[ \phi = \tilde{\phi} + c \underbrace{\frac{\chi(|x|)}{|x|}}_{P}, \]
where $\chi \in C^{\infty}(\mathbb{R})$ is defined as in the introduction. With this Ansatz,  the linearization of $F:W^{2,2}_{\delta+2} \times M^{2,2}_{\delta} \times \mathbb{R} \rightarrow L^2_{\delta+2} \times L^2_{\delta+2}$ about the origin is given by the operator, $L: W^{2,2}_{\delta +2 } \times M^{2,2}_{\delta} \times \mathbb{R} \rightarrow L^2_{\delta+2} \times L^2_{\delta+2}$,
\begin{equation}\label{modlinearization}
L \begin{bmatrix} s \\ \tilde{\phi} \\ c \end{bmatrix} = \begin{bmatrix} (1- \alpha \gamma) \Delta -2 & - \alpha \gamma \Delta & -\alpha \gamma \Delta P \\
																( \alpha \gamma + \frac{\alpha}{\gamma}) \Delta  &  ( 1 + \alpha \gamma) \Delta & ( 1 + \alpha \gamma) \Delta P \end{bmatrix}  \begin{bmatrix} s \\ \tilde{\phi} \\ c \end{bmatrix} ,
																\end{equation}
which we decompose as,
\[ L = \begin{bmatrix} \hat{L} & M \end{bmatrix}.\]
Here, $\hat{L}$ is the same as \eqref{linearization} and $M: \mathbb{R} \rightarrow L^2_{\delta +2} \times L^2_{\delta +2} $ is given by
\[ M c = \begin{bmatrix}  -\alpha \gamma \Delta P \\  ( 1 + \alpha \gamma) \Delta P \end{bmatrix} c.\]
It is clear that the operator $M$ is well defined since $ \Delta P = \Delta \left( \dfrac{\chi(|x|)}{ |x|} \right) $ has compact support. Notice as well that
\[ \int_{\mathbb{R}^3}  \Delta \left(  \frac{\chi(|x|)}{|x|} \right)  dx = -4\pi,\]
so that the range of $M$ and the cokernel of $L$ intersect. The Bordering lemma for Fredholm operators then shows that for $\delta \in (-1/2,1/2)$, the operator  $L: W^{2,2}_{\delta +2 } \times M^{2,2}_{\delta } \times \mathbb{R} \rightarrow L^2_{\delta+2} \times L^2_{\delta +2}$ is invertible. This proves the following result.
\begin{lemma}
Let $\delta \in (-1/2,1/2)$ and $1+ \alpha \gamma >0$. Then the operator $L: W^{2,2}_{\delta +2 } \times M^{2,2}_{\delta} \times \mathbb{R}\rightarrow L^2_{\delta+2} \times L^2_{\delta+2}$, defined by \eqref{modlinearization} is an invertible operator.
\end{lemma}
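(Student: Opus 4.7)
The plan is to exploit the decomposition $L=[\hat{L},M]$ explicitly given in the excerpt and apply a Bordering Lemma argument. By Lemma \ref{lem:Fredholm}, the operator $\hat{L}$ is Fredholm of index $-1$ with one-dimensional cokernel spanned by $(0,1)^T$; equivalently, its range is precisely $\{(f,g)\in L^2_{\delta+2}\times L^2_{\delta+2}: \int g\,\rmd x=0\}$, and since the index is $-1$ with a one-dimensional cokernel, the kernel of $\hat{L}$ is trivial, i.e. $\hat{L}$ is injective. The strategy is then to show that augmenting $\hat{L}$ by the finite-rank operator $M$ precisely kills the obstruction captured by the cokernel.

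First I would verify that $M$ is well defined and bounded. Since $\chi$ vanishes near the origin and equals $1$ for $|x|>2$, the function $\Delta P= \Delta(\chi(|x|)/|x|)$ is smooth and compactly supported, hence trivially lies in $L^2_{\delta+2}$ for every $\delta$; thus $M:\mathbb{R}\to L^2_{\delta+2}\times L^2_{\delta+2}$ is a bounded linear map of rank one, and $L$ is a bounded linear map between the stated spaces.

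Next, I would compute the image of $M$ modulo $\mathrm{Range}(\hat{L})$. Let $\pi:L^2_{\delta+2}\times L^2_{\delta+2}\to\mathbb{R}$ be the projection onto the cokernel, given (up to normalization) by $\pi(u,v)=\int v\,\rmd x$, which is well defined on $L^2_{\delta+2}$ because $\delta+2>3/2$ (this is the reason the dual pairing with constants makes sense and the characterization of $\mathrm{Range}(\hat L)$ in Lemma \ref{lem:Fredholm} is meaningful). Using the identity $\int_{\mathbb{R}^3}\Delta(\chi(|x|)/|x|)\,\rmd x=-4\pi$ already recorded in the text, one gets
\[ \pi(Mc) \;=\; (1+\alpha\gamma)c \int \Delta P\,\rmd x \;=\; -4\pi(1+\alpha\gamma)c, \]
and by the standing assumption $1+\alpha\gamma>0$ this is a nonzero multiple of $c$. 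Hence $\pi\circ M:\mathbb{R}\to\mathbb{R}$ is an isomorphism, which is exactly the non-degeneracy hypothesis of the Bordering Lemma.

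With these ingredients the invertibility of $L$ is immediate. For surjectivity, given $(f,g)\in L^2_{\delta+2}\times L^2_{\delta+2}$, I would first choose $c\in\mathbb{R}$ uniquely so that $\pi(Mc)=\pi(f,g)$, i.e. $c=-\pi(f,g)/(4\pi(1+\alpha\gamma))$; then $(f,g)-Mc$ lies in $\mathrm{Range}(\hat{L})$, so by injectivity of $\hat{L}$ there is a unique $(s,\tilde{\phi})\in W^{2,2}_{\delta+2}\times M^{2,2}_{\delta}$ with $\hat{L}(s,\tilde{\phi})=(f,g)-Mc$. For injectivity, if $L(s,\tilde\phi,c)=0$, applying $\pi$ gives $-4\pi(1+\alpha\gamma)c=0$, hence $c=0$, and then $\hat L(s,\tilde\phi)=0$ forces $(s,\tilde\phi)=0$ by the injectivity of $\hat L$. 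Boundedness of the inverse follows from the open mapping theorem. The only real content is the identification of the cokernel of $\hat L$ and the computation $\int\Delta P\,\rmd x=-4\pi$; both are already in hand, so no substantial obstacle remains.
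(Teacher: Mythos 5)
Your proposal is correct and follows essentially the same route as the paper: the decomposition $L=[\hat{L},M]$, the observation that $\Delta P$ is compactly supported with $\int\Delta P\,\rmd x=-4\pi$ so that $M$ hits the cokernel of $\hat{L}$ nontrivially (using $1+\alpha\gamma>0$), and the Bordering Lemma. The only difference is that you unpack the Bordering Lemma into explicit injectivity and surjectivity arguments, which the paper simply cites.
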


In order to finish the proof of Proposition \ref{invertible} we just need to show that the full operator $F: W^{2,2}_{\delta +2 } \times M^{2,2}_{\delta } \times \mathbb{R}^2 \rightarrow L^2_{\delta+2} \times L^2_{\delta +2}$ is well defined and smooth, justifying our assertion  that $DF(0,0,0;0)=L$. This will be done in the following section.

\subsection{Nonlinear terms}

We now consider the full non-linear operator $F: M^{2,2}_{\delta} \times W^{2,2}_{\delta+2} \times \mathbb{R}^2 \rightarrow L^2_{\delta+2} \times L^2_{\delta+2}$, given by
\begin{equation*}
\begin{split}
F_1 (s, \phi, c) = & ( 1- \alpha \gamma) \Delta s -2 s  - \gamma \alpha \Delta \phi - ( \gamma +s) [ | \nabla s|^2 +2 \nabla s \cdot \nabla \phi + | \nabla \phi|^2] \\
&-2 \alpha | \nabla s|^2 -2 \alpha \nabla s \cdot \nabla \phi  - \alpha s ( \Delta s + \Delta \phi) - \frac{3}{\gamma} s^2 - \frac{1}{\gamma^2}s^3,
\end{split}
\end{equation*}
\begin{equation*}
\begin{split}
 F_2(s, \phi, c) = & \left( \frac{\alpha}{\gamma} + \alpha \gamma \right) \Delta s + ( 1 + \alpha \gamma) \Delta \phi + \alpha s ( \Delta s + \Delta \phi)  + 2 \alpha \nabla s \cdot \nabla \phi \\
 &+( \gamma - \alpha + s) \left[ | \nabla s|^2 + 2 \nabla s \cdot \nabla \phi + | \nabla \phi|^2 \right] + 2 \alpha | \nabla s|^2  + \frac{3 s^2}{\gamma} + \frac{s^3}{\gamma^2} \\
 & +\frac{1}{(\gamma + s)} \left[  2 | \nabla s|^2 + 2 \nabla s \cdot \nabla \phi - s^2 - \frac{s^3}{\gamma} - \frac{ \alpha}{\gamma} s \Delta s\right] + \ep g(x).
 \end{split}
 \end{equation*}

We omitted the ``hats" for ease of notation and use  $\phi = \tilde{\phi} + c P $, with $P =  \dfrac{\chi(|x|)}{|x|}$.
With the help of the next  lemma we show that $F$ is well defined in the sense that all non-linear terms are in the space $L^p_{\delta +2}$. 

\begin{lemma}\label{Holder}
Let $\delta \in \mathbb{R}$. If $f,g \in W^{1,2}_{\delta +1}$, then the product $fg \in L^2_{\delta+2}$. 
\end{lemma}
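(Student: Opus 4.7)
The plan is to reduce the bilinear estimate to the classical Sobolev embedding $H^{1}(\mathbb{R}^{3})\hookrightarrow L^{6}(\mathbb{R}^{3})$ by conjugating away the weight, then interpolate to an $L^{4}$ bound, and close with Cauchy--Schwarz.

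First, I would establish the weighted Sobolev embedding $W^{1,2}_{\delta+1}(\mathbb{R}^{3})\hookrightarrow L^{6}_{\delta+1}(\mathbb{R}^{3})$. Setting $\tilde{f}:=\langle x\rangle^{\delta+1}f$, the identity $\|\tilde{f}\|_{L^{2}}=\|f\|_{L^{2}_{\delta+1}}$ is immediate, and from the product rule
\[
\nabla\tilde{f}=\langle x\rangle^{\delta+1}\nabla f+(\delta+1)\,x\,\langle x\rangle^{\delta-1}f
\]
one obtains $\|\nabla\tilde{f}\|_{L^{2}}\leq\|\nabla f\|_{L^{2}_{\delta+1}}+|\delta+1|\,\|f\|_{L^{2}_{\delta}}$. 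Since $\langle x\rangle^{\delta}\leq\langle x\rangle^{\delta+1}$ we have $L^{2}_{\delta+1}\subset L^{2}_{\delta}$, so both terms are controlled by $\|f\|_{W^{1,2}_{\delta+1}}$. Hence $\tilde{f}\in H^{1}\hookrightarrow L^{6}$, and stripping the weight yields $f\in L^{6}_{\delta+1}$; the same argument applies to $g$.

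Next, I would interpolate $f\in L^{2}_{\delta+1}\cap L^{6}_{\delta+1}$ (both spaces share the same weight) to place $f,g\in L^{4}_{\delta+1}$. Cauchy--Schwarz in the symmetric split
\[
\int|fg|^{2}\langle x\rangle^{2(\delta+2)}\,dx\;\leq\;\Big(\int|f|^{4}\langle x\rangle^{2(\delta+2)}\,dx\Big)^{1/2}\Big(\int|g|^{4}\langle x\rangle^{2(\delta+2)}\,dx\Big)^{1/2}
\]
then reduces the claim to bounding each factor by $\|f\|_{L^{4}_{\delta+1}}^{2}$ (resp.\ $g$), which is valid through the trivial inclusion $L^{4}_{\delta+1}\subset L^{4}_{(\delta+2)/2}$ whenever $(\delta+2)/2\leq\delta+1$.

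The main obstacle I anticipate is the weight bookkeeping at this last step: the naive Sobolev--Cauchy--Schwarz scheme produces $fg\in L^{2}_{2\delta+2}$, which sits inside the desired target $L^{2}_{\delta+2}$ only when $\delta\geq 0$. For smaller $\delta$ I would split the integral into a near field $\{|x|\leq R\}$, where the weight $\langle x\rangle^{2(\delta+2)}$ is bounded and the standard $H^{1}\hookrightarrow L^{4}$ embedding on a ball handles $fg$, and a far field $\{|x|>R\}$, where the pointwise decay implicit in $f,g\in L^{6}_{\delta+1}$ is integrated against the compensating power of $\langle x\rangle$ using an asymmetric H\"older pairing $L^{6}_{\delta+1}\cdot L^{3}_{\delta+1}$. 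Verifying that the weight powers balance correctly in this far-field piece is the only nontrivial bookkeeping in the argument.
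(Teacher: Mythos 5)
Your argument through the Cauchy--Schwarz reduction is correct and, for $\delta\geq 0$, it is a complete proof along exactly the lines the paper intends: the paper's entire proof is the sentence ``this is a consequence of H\"older's inequality and the Sobolev embeddings,'' and your conjugation $\tilde f=\langle x\rangle^{\delta+1}f\in H^{1}(\mathbb{R}^{3})\hookrightarrow L^{6}$, followed by interpolation to $L^{4}_{\delta+1}$ and H\"older, is the standard way to fill that in. The genuine gap is your final paragraph. Membership in $L^{6}_{\delta+1}$ carries no pointwise decay --- $W^{1,2}(\mathbb{R}^{3})$ does not embed into $L^{\infty}$ --- so there is no ``pointwise decay implicit in $f,g\in L^{6}_{\delta+1}$'' to integrate in the far field. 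More structurally, every H\"older pairing available on the scale $L^{p}_{\delta+1}$, $2\leq p\leq 6$ (symmetric $L^{4}\cdot L^{4}$ or asymmetric $L^{6}\cdot L^{3}$), attaches total weight $2(\delta+1)$ to the product, which falls short of the required $\delta+2$ by $-\delta>0$ whenever $\delta<0$; no redistribution of exponents between the two factors recovers the missing factor $\langle x\rangle^{-\delta}$.

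This is not bookkeeping that a near/far splitting can repair: the statement is in fact false for $\delta<0$. Take $f=g=\sum_{n}h_{n}\psi(x-x_{n})$ with $\psi$ a fixed bump of unit width, the supports disjoint, $|x_{n}|=R_{n}\to\infty$, and $h_{n}^{2}R_{n}^{2(\delta+1)}=n^{-2}$, so that $f\in W^{1,2}_{\delta+1}$. Then
\[
\int |f|^{4}\langle x\rangle^{2(\delta+2)}\,dx\;\approx\;\sum_{n}h_{n}^{4}R_{n}^{2(\delta+2)}\;=\;\sum_{n}n^{-4}R_{n}^{-2\delta},
\]
which diverges for $\delta<0$ once $R_{n}$ grows fast enough; hence $f^{2}\notin L^{2}_{\delta+2}$. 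So the lemma holds exactly on the range $\delta\geq 0$ where your scheme closes, and for the negative values of $\delta$ admitted by Theorem \ref{Th:main} one must instead exploit the extra weight carried by the specific factors appearing in $F_{1}$ and $F_{2}$ (for instance $\nabla s\in W^{1,2}_{\delta+2}$, not merely $W^{1,2}_{\delta+1}$), rather than the blanket statement of the lemma. Your observation that $\delta\geq 0$ is where the naive scheme closes is therefore not a defect of the scheme but the true boundary of validity of the statement as written.
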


\begin{proof}
This lemma is a consequence of H\"older's inequality and the Sobolev embeddings. 
\end{proof}

Notice also that if $\delta >-2$, then $W^{2,p}_{\delta+2} \subset W^{2,p}$. Furthermore, if $p=2$ we have $W^{2,2}_{\delta+2} \subset W^{2,2} \hookrightarrow BC(\mathbb{R}^3)$.

\begin{proposition}\label{prop:Nonlinearity}
Let $\delta \in (-2,1/2)$, and $g \in L^2_{\delta +2}$. Then the linear operator $F: W^{2,2}_{\delta +2} \times M^{2,2}_{\delta} \times \mathbb{R}^2 \rightarrow L^2_{\delta +2} \times L^2_{\delta+2}$ defined by \eqref{amplitude} and \eqref{phase}, is well defined and smooth.
\end{proposition}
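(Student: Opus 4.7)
The plan is to check, term by term, that every summand in $F_1$ and $F_2$ lands in $L^2_{\delta+2}$, and then deduce smoothness from the structure of these summands, which are polynomial in the pointwise values of $s$ and in the derivatives, together with the rational factor $(\gamma+s)^{-1}$. The hypothesis $\delta \in (-2,1/2)$ enters in two essential ways: the lower bound $\delta > -2$ gives $W^{2,2}_{\delta+2}\subset W^{2,2}\hookrightarrow L^\infty$, so $s$ is bounded and $(\gamma+s)^{-1}$ is a smooth function of $s$ on a neighbourhood of $0$; the upper bound $\delta < 1/2$ will be required to absorb the worst-decaying far-field correction, namely the quadratic term $c^2|\nabla P|^2$.

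My first step records the weights of the gradients. For $s\in W^{2,2}_{\delta+2}$, both $\nabla s$ and $\nabla^2 s$ lie in $L^2_{\delta+2}\subset L^2_{\delta+1}$, so $\nabla s \in W^{1,2}_{\delta+1}$. The Kondratiev norm on $\tilde\phi \in M^{2,2}_\delta$ gives $\nabla\tilde\phi\in L^2_{\delta+1}$ and $\nabla^2\tilde\phi\in L^2_{\delta+2}\subset L^2_{\delta+1}$, so $\nabla\tilde\phi \in W^{1,2}_{\delta+1}$ as well. Lemma \ref{Holder} then places each of $|\nabla s|^2$, $\nabla s\cdot\nabla\tilde\phi$, and $|\nabla\tilde\phi|^2$ in $L^2_{\delta+2}$; the linear terms $\Delta s,\Delta\tilde\phi,s$ are in $L^2_{\delta+2}$ by definition of the spaces.

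Next I deal with the contribution of $cP$ to $\phi=\tilde\phi+cP$. Since $\chi$ is a smooth cut-off, $\|\nabla P\|_\infty<\infty$ and $\|\langle x\rangle\nabla P\|_\infty<\infty$, and $\Delta P$ is compactly supported. Thus $\nabla s\cdot\nabla P$ is controlled by $\|\nabla P\|_\infty\,\|\nabla s\,\langle x\rangle^{\delta+2}\|_{L^2}$, and $\nabla\tilde\phi\cdot\nabla P$ by $\|\langle x\rangle\nabla P\|_\infty\,\|\nabla\tilde\phi\,\langle x\rangle^{\delta+1}\|_{L^2}$, both finite. The hard term is $|\nabla P|^2$: a direct computation gives $\||\nabla P|^2\|_{L^2_{\delta+2}}^{2}=\int|\nabla P|^4\langle x\rangle^{2\delta+4}\,dx$, whose integrand decays like $|x|^{2\delta-4}$ for large $|x|$ and is therefore integrable in $\mathbb{R}^3$ exactly when $\delta<1/2$. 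This is the decisive constraint and the origin of the upper bound on $\delta$ in the statement. All terms involving $\Delta P$ are immediately in $L^2_{\delta+2}$ by its compact support.

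For the remaining polynomial and rational terms I will exploit the $L^\infty$ bound $|s|\leq C\|s\|_{W^{2,2}_{\delta+2}}$: multiplication by $s$, $s^2$, $(\gamma+s)$, or (for $\|s\|_\infty<|\gamma|/2$) by the analytic function $(\gamma+s)^{-1}$ maps $L^2_{\delta+2}$ to itself, so $s^2=s\cdot s$, $s\Delta s$, $s\Delta\tilde\phi$, and the bracketed combinations in $F_1,F_2$ premultiplied by $(\gamma+s)^{\pm 1}$ all lie in $L^2_{\delta+2}$. Smoothness of $F$ then follows because it is assembled from continuous linear maps, the continuous bilinear map $W^{1,2}_{\delta+1}\times W^{1,2}_{\delta+1}\to L^2_{\delta+2}$ supplied by Lemma \ref{Holder}, multiplication by elements of $L^\infty$, and composition with the real-analytic map $t\mapsto(\gamma+t)^{-1}$ acting on $s$ via the continuous embedding $W^{2,2}_{\delta+2}\hookrightarrow L^\infty$. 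This in particular justifies the identification $DF(0,0,0;0)=L$ used in Subsection \ref{linear}.
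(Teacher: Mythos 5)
Your proposal is correct and follows essentially the same route as the paper's proof: Lemma \ref{Holder} together with the embedding $W^{2,2}_{\delta+2}\hookrightarrow L^\infty$ handles all terms free of $c$, the cross terms with $\nabla P$ are absorbed using the boundedness of $\langle x\rangle\nabla P$ and the compact support of $\Delta P$, and the decisive constraint $\delta<1/2$ comes from the same convergence computation for $\int|\nabla P|^4\langle x\rangle^{2(\delta+2)}\,dx$. Your treatment of smoothness and of the factor $(\gamma+s)^{-1}$ is somewhat more explicit than the paper's, but it is the same argument.
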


\begin{proof}
Since $\delta \in (-2,1/2)$ the results form Lemma \ref{Holder}, and the embedding $W^{2,2}_{\delta+2} \subset BC(\mathbb{R}^2)$ suggest that all terms which do not involve the parameter $c$ are in the space $L^2_{\delta+2}$. Since all derivatives of $ \dfrac{\chi(|x|)}{|x|} $ are bounded, the only terms we need to worry about come from the expression $| \nabla \phi|^2$.
Recall here that $\phi = \tilde{\phi} + c P$, with $P= \dfrac{\chi(|x|)}{|x|}$ and $\tilde{\phi} \in M^{2,2}_{\delta}$, so that
\[ | \nabla \phi|^2 = |\nabla \tilde{\phi} |^2 + 2 c \nabla \tilde{\phi} \cdot \nabla P + c^2 | \nabla P |^2.\]
It is clear from Lemma \ref{Holder} that the expression $ |\nabla \tilde{\phi} |^2\in L^2_{\delta+2}$. Also, because $\nabla P $ is bounded in compact sets and behaves like $\langle x \rangle^{-2}$ for large $|x|$,  a straightforward calculation shows that $\nabla \tilde{\phi} \cdot \nabla P$ is in the desired space.
Finally, since $\delta  < 1/2$ the following integral converges
\[ \int_{\mathbb{R}^3} | \nabla P |^{4} \langle  x \rangle^{2(\delta +2)} \;dx \leq \int_{1}^{\infty} r^{2(\delta+2) -8} r^2 \;dr. \]
Given that all non-linear terms are defined via superposition operators of algebraic functions, they are smooth once well defined. This completes the proof.

\end{proof}

\section{Numerical Results}\label{numerics}
For the numerical simulations we consider the perturbed complex Ginzburg-Landau equation in a co-rotating frame,
\begin{equation}\label{numCGL}
A_t = ( 1+ i) \Delta A +( 1 +5 i ) A - (1+5i)A|A|^2 + i \ep g(x)A.
\end{equation}
The initial condition is the steady state $A=1$, and we take $\ep = 0.5$ and define the inhomogeneity as, 
\begin{equation}\label{inhom}
g(x,y,z) =(1 + x^2 +y^2 +z^2)^{-\alpha}.
\end{equation}
The domain is a cube of length $l = 40$ and the results are taken at time $T=500$ for different values of $\alpha$. Each value of $\alpha$ corresponds to a region in $\delta-$space for which the linearization $\hat{L}$ has different Fredholm properties (see Table \ref{deltaTable}). All numerical simulations were done on Matlab using  exponential time difference combined with an order four Runge-Kutta method. The grid size used was $N= 256$ and time step $h=1$. For more details on the code see \cite{K03, KT05}.

{\footnotesize
\begin{table}[h]
\centering{
\begin{tabular}{|c |c | c | c | c | c | c | c | c | c | }
\hline
Operator $\hat{L}$ is & \multicolumn{4}{|c|}{Invertible} &\multicolumn{3}{|c|}{Fredholm index -1}&\multicolumn{2}{|c|}{Fredholm index -3}\\
\hline
$\delta$-range & \multicolumn{4}{|c|}{$-3/2 < \delta < -1/2 $} & \multicolumn{3}{|c|}{ $ -1/2 < \delta <1/2$} & \multicolumn{2}{|c|}{$1/2< \delta< \infty$}\\
\hline
$\alpha$ & 1.2&1.3&1.4 &1.5&1.6 &1.8&2&2.2&2.4\\
\hline
$m_{\phi}$ &-0.608 &0.736 & -0.708 &-0.806& -0.949 & -1.029 & -1.066 & -1.046 & -1.06\\
\hline
\end{tabular}
}
\caption{The inhomogeneity, $g$, is in  $ L^2_{\delta+2}$ if $2\alpha > 3/2 +(\delta +2)$. The constant $m_{\phi}$ represents the decay rates for the phase ($\Phi(x,\ep,t) \sim |x|^{m_{\phi}}$) found in the numerical simulations (see figures at the end of Section \ref{numerics}). }
\label{deltaTable}
\end{table}
}
Table \ref{deltaTable} illustrates for which values of $\delta$ our results are valid. We are not interested in inhomogeneities with $\alpha <1$, since in this case our solutions blow up.  For inhomogeneities with $1<\alpha \leq 1.5$ we can pick $\delta \in (-3/2,-1/2)$. The result is that the linearization $L$ is invertible and in this case we do not have far field corrections. Consequently, we cannot make predictions on the asymptotic decay of the phase, but we can say that the phase $\phi$, viewed as a function of space alone, should satisfy the same properties as a function in $M^{2,2}_{\delta}$, i.e. $|\phi(x)|< C \langle x \rangle^{-\delta-3/2}$ (see Lemma \ref{decay}). On the other hand, for inhomogeneities with $\alpha >1.4$, the numerical results confirm that the phase decays at order $\rmO(1/|x|)$. 

We conclude this short section with some plots (Figures \ref{phase1} and \ref{amp1}) that illustrate the results of Table \ref{deltaTable}. Figure \ref{phase1} depict the phase of solutions to \eqref{numCGL} at the cross section $z=y=0$ and for different values of $\alpha$, and Figure \ref{amp1} depicts the amplitude of solutions for these same values. Notice that this las figure shows that the bounds for the amplitude in Theorem \ref{Th:main} are satisfied though not sharp, so that finding an asymptotic expansion for this quantity is still an open problem.
 
\begin{figure}[h]
\centering
\includegraphics[scale=0.6, trim= 0 0 0 0]{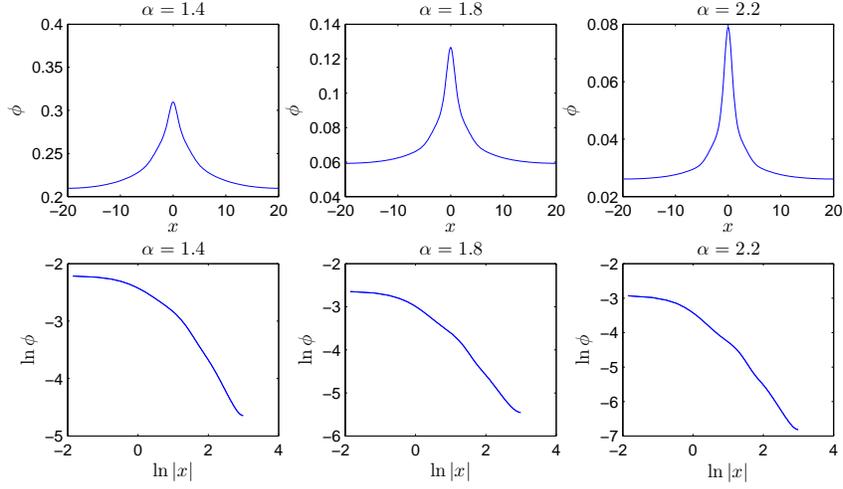} 
\caption{Plot of $\phi$ vs. $x$ and $\ln{\phi}$ vs. $\ln |x|$ at the cross section $z=0,y=0$ for values of $\alpha= 1.4, 1.8$ and $\alpha= 2.2$}
\label{phase1}
\end{figure}
\begin{figure}[h]
\centering
\includegraphics[scale=0.6, trim= 0 0 0 0]{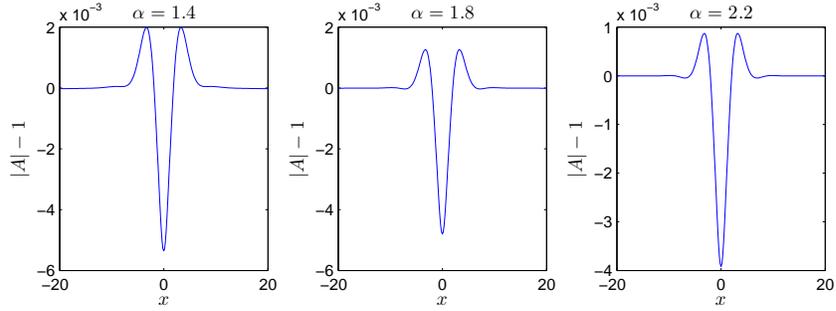} 
\caption{Plot of $|A|-1$ vs. $x$ at the cross section $z=0,y=0$ for values of $\alpha= 1.4, 1.8$ and $\alpha= 2.2$}
\label{amp1}
\end{figure}

\clearpage


\medskip
\medskip

\end{document}